\newcommand{\bv}{{\bf v}}
\newcommand{\bu}{{\bf u}}
\newcommand{\be}{{\bf e}}
\newcommand{\bn}{{\bf n}}
\newcommand{\bx}{{\bf x}}
\newcommand{\by}{{\bf y}}
\newcommand{\bQ}{{\bf Q}}
\newcommand{\sgn}{\text{sgn}}
\newcommand{\pT}{{\partial T}}
\def\bbQ{\mathbb{Q}}
\def\T{{\mathcal T}}
\def\E{{\mathcal E}}
\def\l{{\langle}}
\def\r{{\rangle}}
\def\3bar{{|\hspace{-.02in}|\hspace{-.02in}|}}
\newtheorem{defi}{Definition}[section]
\newtheorem{algorithm}{Weak Galerkin Algorithm}
\renewcommand{\ldots}{\dotsc}
\title{Weak Galerkin Finite Element
Methods for the Biharmonic Equation on Polytopal Meshes}
\author{Lin Mu\thanks{Department of Mathematics, University of Arkansas at
Little Rock, Little Rock, AR 72204} \and Junping
Wang\thanks{Division of Mathematical Sciences, National Science
Foundation, Arlington, VA 22230 (jwang@\break nsf.gov). The research
of Wang was supported by the NSF IR/D program, while working at
National Science Foundation. However, any opinion, finding, and
conclusions or recommendations expressed in this material are those
of the author and do not necessarily reflect the views of the
National Science Foundation,} \and Xiu Ye\thanks{Department of
Mathematics, University of Arkansas at Little Rock, Little Rock, AR
72204 (xxye@ualr.edu). This research was supported in part by
National Science Foundation Grant DMS-1115097.}}
\begin{document}

\maketitle

\begin{abstract}
A new weak Galerkin (WG) finite element method is introduced and
analyzed in this paper for the biharmonic equation in its primary
form. This method is highly robust and flexible in the element
construction by using discontinuous piecewise polynomials on general
finite element partitions consisting of polygons or polyhedra of
arbitrary shape. The resulting WG finite element formulation is
symmetric, positive definite, and parameter-free. Optimal order
error estimates in a discrete $H^2$ norm is established for the
corresponding WG finite element solutions. Error estimates in the
usual $L^2$ norm are also derived, yielding a sub-optimal order of
convergence for the lowest order element and an optimal order of
convergence for all high order of elements. Numerical results are
presented to confirm the theory of convergence under suitable
regularity assumptions.
\end{abstract}

\begin{keywords}
weak Galerkin, finite element methods, weak Laplacian, biharmonic
equations, polyhedral meshes.
\end{keywords}

\begin{AMS}
Primary, 65N15, 65N30, 74K20; Secondary, 35B45, 35J50, 35J35
\end{AMS}
\pagestyle{myheadings}

\section{Introduction}

This paper is concerned with numerical methods for the biharmonic
equation with boundary conditions. For simplicity, consider the
model problem that seeks an unknown function $u=u(x)$ satisfying
\begin{eqnarray}
\Delta^2 u&=&f,\quad \mbox{in}\;\Omega,\label{pde}\\
u&=&\zeta,\quad\mbox{on}\;\partial\Omega,\label{bc-d}\\
\frac{\partial u}{\partial
n}&=&\phi\quad\mbox{on}\;\partial\Omega,\label{bc-n}
\end{eqnarray}
\medskip
where $\Delta$ is the Laplacian operator, $\zeta=\zeta(x)$ and
$\phi=\phi(x)$ are given functions defined on the boundary of the
domain $\Omega$. Assume that the domain $\Omega$ is open bounded
with a Lipschitz continuous boundary $\partial \Omega$ in
$\mathbb{R}^d, d=2,3$.

A natural variational formulation for the biharmonic equation
(\ref{pde}) with Dirichlet and Neumann boundary conditions
(\ref{bc-d}) and (\ref{bc-n}) seeks $u\in H^2(\Omega)$ satisfying
$u|_{\partial \Omega}=\zeta$ and $\frac{\partial u}{\partial
n}|_{\partial \Omega}=\phi$ such that
\begin{equation}\label{wf}
(\Delta u, \Delta v) = (f,v),\qquad \forall v\in H_0^2(\Omega),
\end{equation}
where $H_0^2(\Omega)$ is a subspace of the Sobolev space
$H^2(\Omega)$ consisting of functions with vanishing value and
normal derivative on $\partial\Omega$. Based on the variational form
(\ref{wf}), one may design various conforming finite element schemes
for (\ref{pde})-(\ref{bc-n}) by constructing finite element spaces
as subspaces of $H^2(\Omega)$. It is known that $H^2$-conforming
methods essentially require $C^1$-continuous piecewise polynomials
on a prescribed finite element partition, which imposes an enormous
difficulty in practical computation. Due to the complexity in the
construction of $C^1$-continuous elements, $H^2$-conforming finite
element methods are rarely used in practice for solving the
biharmonic equation.

As an alternative approach, nonconforming and discontinuous Galerkin
finite element methods have been developed for solving the
biharmonic equation over the last several decades. The Morley
element \cite{morley} is a well-known example of nonconforming
element for the biharmonic equation by using piecewise quadratic
polynomials. Recently, a $C^0$ interior penalty method was studied
in \cite{bs,ghlmt}. In \cite{ms}, a $hp$-version interior-penalty
discontinuous Galerkin method was developed for the biharmonic
equation. To avoid the use of $C^1$-elements, mixed methods have
been developed for the biharmonic equation by reducing the fourth
order problem to a system of two second order equations
\cite{ab,falk,gnp,monk,mwwy}.

In this paper, we will develop a highly flexible and robust weak
Galerkin finite element method for the biharmonic equation that
allows the use of generalized approximating functions on general
partitions consisting of polygons or polyhedra of arbitrary shape
with certain shape regularity. The weak Galerkin method refers to a
numerical technique for partial differential equations where
differential operators are interpreted and approximated as
distributions over a set of generalized functions. The method/idea
was first introduced in \cite{wy} for second order elliptic
equations, and the concept was further developed in \cite{wy-mixed,
wy-stokes, mwy-polytope}. By design, weak Galerkin methods use
generalized and/or discontinuous approximating functions on general
meshes to overcome the barrier in the construction of ``smooth''
finite element functions.

Intuitively, a weak Galerkin finite element scheme for the
biharmonic equation (\ref{pde})-(\ref{bc-n}) can be derived by
replacing the differential operator $\Delta$ in (\ref{wf}) by a
discrete weak Laplacian, denoted by $\Delta_w$. However, such a
straight forward replacement may not work without including a
mechanism that enforces a certain weak continuity of the underlying
approximating functions. A weak continuity can be realized by
introducing an appropriately defined stabilizer, denoted as
$s(\cdot,\cdot)$. Formally, our weak Galerkin finite element method
for (\ref{pde})-(\ref{bc-n}) can be described by seeking a finite
element function $u_h$ satisfying
\begin{equation}\label{wf1}
(\Delta_w u_h, \Delta_w v)+s(u_h,v) = (f, v)
\end{equation}
for all testing functions $v$. The goal of the paper is to specify
all the details for (\ref{wf1}), and further justify the
rigorousness of the method by establishing a mathematical
convergence theory.

The paper is organized as follows. In Section
\ref{Section:preliminaries}, we introduce some standard notations
for Sobolev spaces. A weak Laplacian operator and its discrete
version will be introduced in Section \ref{Section:weak-Laplacian}.
In Section \ref{Section:wg-fem}, we shall present two WG finite
element schemes for the biharmonic equation
(\ref{pde})-(\ref{bc-n}). In Section \ref{Section:L2projections}, we
shall introduce some local $L^2$ projection operators and then
derive some approximation properties which are useful in a
convergence analysis. In Section \ref{Section:error-analysis}, we
shall establish optimal order error estimates for the WG finite
element approximation in a $H^2$-equivalent discrete norm. In
Section \ref{Section:error-estimate-in-L2}, we shall derive an error
estimate for the WG-FEM approximation in the usual $L^2$ norm.
Results from two numerical experiments are reported in Section
\ref{Section:numerical-results}. Finally, we provide some technical
results in the appendix that are critical in dealing with finite
element functions on arbitrary polygons/polyhedra.

\section{Preliminaries and Notations}\label{Section:preliminaries}

Let $D$ be any open bounded domain with Lipschitz continuous
boundary in $\mathbb{R}^d, d=2, 3$. We use the standard definition
for the Sobolev space $H^s(D)$ and the associated inner product
$(\cdot,\cdot)_{s,D}$, norm $\|\cdot\|_{s,D}$, and seminorm
$|\cdot|_{s,D}$ for any $s\ge 0$. For example, for any integer $s\ge
0$, the seminorm $|\cdot|_{s, D}$ is given by
$$
|v|_{s, D} = \left( \sum_{|\alpha|=s} \int_D |\partial^\alpha v|^2
dD \right)^{\frac12}
$$
with the usual notation
$$
\alpha=(\alpha_1, \ldots, \alpha_d), \quad |\alpha| =
\alpha_1+\ldots+\alpha_d,\quad
\partial^\alpha =\prod_{j=1}^d\partial_{x_j}^{\alpha_j}.
$$
The Sobolev norm $\|\cdot\|_{m,D}$ is given by
$$
\|v\|_{m, D} = \left(\sum_{j=0}^m |v|^2_{j,D} \right)^{\frac12}.
$$

The space $H^0(D)$ coincides with $L^2(D)$, for which the norm and
the inner product are denoted by $\|\cdot \|_{D}$ and
$(\cdot,\cdot)_{D}$, respectively. When $D=\Omega$, we shall drop
the subscript $D$ in the norm and inner product notation.

The space $H({\rm div};D)$ is defined as the set of vector-valued
functions on $D$ which, together with their divergence, are square
integrable; i.e.,
\[
H({\rm div}; D)=\left\{ \bv: \ \bv\in [L^2(D)]^d, \nabla\cdot\bv \in
L^2(D)\right\}.
\]
The norm in $H({\rm div}; D)$ is defined by
$$
\|\bv\|_{H({\rm div}; D)} = \left( \|\bv\|_{D}^2 + \|\nabla
\cdot\bv\|_{D}^2\right)^{\frac12}.
$$

\section{Weak Laplacian and Discrete Weak Laplacian}\label{Section:weak-Laplacian}

For the biharmonic problem (\ref{pde})-(\ref{bc-n}) with the
variational form (\ref{wf}), the principle differential operator is
the Laplacian $\Delta$. Thus, we shall introduce a weak Laplacian
operator defined on a class of discontinuous functions. For
numerical purposes, we will define a discrete version of the weak
Laplacian in polynomial subspaces.

Let $T$ be any polygonal or polyhedral domain with boundary
$\partial T$. By a {\em weak function} on the region $T$ we mean a
function $v=\{v_0, v_b, \bv_{g}\}$ such that $v_0\in L^2(T)$,
$v_b\in H^{\frac12}(\partial T)$, and $\bv_g\cdot\bn\in
H^{-\frac12}(\partial T)$, where $\bn$ is the outward normal
direction of $T$ on its boundary. The first and second  components
$v_0$ and $v_b$ can be understood as the value of $v$ in the
interior and on the boundary of $T$. The third component $\bv_g$
intends to represent the gradient of $v$ on the boundary of $T$.
Note that $v_b$ and $\bv_g$ may not be necessarily related to the
trace of $v_0$ and $\nabla v_0$ on $\partial T$, respectively.

Denote by $W(T)$ the space of all weak functions on $T$; i.e.,
\begin{equation}\label{www}
W(T) = \{v=\{v_0, v_b,\bv_g \}:\ v_0\in L^2(T),\; v_b\in
H^{\frac12}(\partial T), \ \bv_g\cdot\bn\in H^{-\frac12}(\partial
T)\}.
\end{equation}
Let $\langle\cdot,\cdot\rangle_\pT$ be the inner product in
$L^2(\pT)$. Define $G_2(T)$ by
$$
G_2(T)=\{\varphi: \ \varphi\in H^1(T), \Delta\varphi\in L^2(T)\}.
$$
For any $\varphi\in G_2(T)$, we have $\nabla\varphi\in H(div,T)$,
and hence $\nabla\varphi\cdot\bn\in H^{-\frac12}(\partial T)$.

\medskip

\begin{defi}
The dual of $L^2(T)$ can be identified with itself by using the
standard $L^2$ inner product as the action of linear functionals.
With a similar interpretation, for any $v\in W(T)$, the {\em weak
Laplacian} of $v=\{v_0, v_b,\bv_g \}$ is defined as a linear
functional $\Delta_w v$ in the dual space of $G_2(T)$ whose action
on each $\varphi\in G_2(T)$ is given by
\begin{equation}\label{wl}
(\Delta_w v, \varphi)_T = (v_0, \Delta\varphi)_T-\l
v_b,\nabla\varphi\cdot\bn\r_\pT +\l \bv_g\cdot\bn, \varphi\r_\pT,
\end{equation}
where $\bn$ is the outward normal direction to $\partial T$.
\end{defi}

The Sobolev space $H^2(T)$ can be embedded into the space $W(T)$ by
an inclusion map $i_W: \ H^2(T)\to W(T)$ defined as follows
$$
i_W(\phi) = \{\phi|_{T}, \phi|_{\partial T}, \nabla\phi|_{\partial
T}\},\qquad \phi\in H^2(T).
$$
With the help of the inclusion map $i_W$, the Sobolev space $H^2(T)$
can be viewed as a subspace of $W(T)$ by identifying each $\phi\in
H^2(T)$ with $i_W(\phi)$. Analogously, a weak function
$v=\{v_0,v_b,\bv_g\}\in W(T)$ is said to be in $H^2(T)$ if it can be
identified with a function $\phi\in H^2(T)$ through the above
inclusion map. It is not hard to see that the weak Laplacian is
identical with the strong Laplacian in $H^2(T)$; i.e., $\Delta_w
v=\Delta v$ for all functions $v\in H^2(T)$.

Next, we introduce a discrete weak Laplacian operator by
approximating $\Delta_w$ in a polynomial subspace of the dual of
$G_2(T)$. To this end, for any non-negative integer $r\ge 0$, denote
by $P_{r}(T)$ the set of polynomials on $T$ with degree no more than
$r$. A discrete weak Laplacian operator, denoted by
$\Delta_{w,r,T}$, is defined as the unique polynomial
$\Delta_{w,r,T}v \in P_r(T)$ satisfying the following equation
\begin{equation}\label{dwl}
(\Delta_{w,r,T} v, \varphi)_T = ( v_0, \Delta\varphi)_T-\l
v_b,\nabla\varphi\cdot\bn\r_\pT +\l \bv_g\cdot\bn,
\varphi\r_\pT,\quad \forall \varphi\in P_r(T).
\end{equation}

\section{Weak Galerkin Finite Element Schemes}\label{Section:wg-fem}

Let ${\cal T}_h$ be a partition of the domain $\Omega$ into polygons
in 2D or polyhedra in 3D. Assume that ${\cal T}_h$ is shape regular
in the sense as defined in \cite{wy-mixed} (seel also Appendix
\ref{Section:appendix}). Denote by ${\cal E}_h$ the set of all edges
or flat faces in ${\cal T}_h$, and let ${\cal E}_h^0={\cal
E}_h\backslash\partial\Omega$ be the set of all interior edges or
flat faces.

For any given integer $k\ge 2$, denote by $W_k(T)$ the discrete weak
function space given by
\begin{equation}\label{Wkspace}
W_k(T)=\{\{v_0,v_b, \bv_g\}:\; {v_0}\in P_k(T), \ v_b\in P_k(e), \
\bv_g\in [P_{k-1}(e)]^d,  \ e\subset\partial T\}.
\end{equation}
By patching $W_k(T)$ over all the elements $T\in {\cal T}_h$ through
a common value on the interface ${\cal E}_h^0$, we arrive at a weak
finite element space $V_h$ given by
\[
V_h=\{\{v_0,v_b, \bv_g\}:\ \{v_0,v_b, \bv_g\}|_T\in W_k(T),\ \forall
T\in\T_h\}.
\]

Denote by $\Lambda_h$ the trace of $V_h$ on $\partial\Omega$ from
the component $v_b$. It is clear that $\Lambda_h$ consists of
piecewise polynomials of degree $k$. Similarly, denote by
$\Upsilon_h$ the trace of $V_h$ from the normal component of $\bv_g$
as piecewise polynomials of degree $k-1$. Denote by $V_h^0$ the
subspace of $V_h$ with vanishing traces; i.e.,
$$
V_h^0=\{v=\{v_0,v_b,\bv_{g}\}\in V_h, {v_b}|_e=0,\
{\bv_{g}}\cdot\bn|_e=0,\ e\subset\partial T\cap\partial\Omega\}.
$$

Denote by $\Delta_{w,k-2}$ the discrete weak Laplacian operator on
the finite element space $V_h$ computed by using (\ref{dwl}) on each
element $T$ for $k\ge 2$; i.e.,
$$
(\Delta_{w,k-2}v)|_T =\Delta_{w,k-2,T} (v|_T),\qquad \forall v\in
V_h.
$$
For simplicity, we shall drop the subscript $k-2$ in the notation
$\Delta_{w,k-2}$ for the discrete weak Laplacian. We also introduce
the following notation
$$
(\Delta_w v,\Delta_w w)_h:=\sum_{T\in {\cal T}_h}(\Delta_w
v,\Delta_w w)_T.
$$

\subsection{Algorithm I}

For any $u_h=\{u_0,u_b,\bu_{g}\}$ and $v=\{v_0,v_b,\bv_{g}\}$ in
$V_h$, we introduce a bilinear form as follows
\[
s(u_h, v):=\sum_{T\in\T_h} h_T^{-1}\langle \nabla u_0-\bu_{g},
\nabla v_0-\bv_{g}\rangle_\pT + \sum_{T\in\T_h} h_T^{-3}\langle
u_0-u_{b}, v_0-v_{b}\rangle_\pT.
\]

\smallskip
\begin{algorithm}
A numerical approximation for (\ref{pde})-(\ref{bc-n}) can be
obtained by seeking $u_h=\{u_0,u_b,\bu_{g}\}\in V_h$ satisfying
$u_b=Q_b \zeta$ and $\bu_{g}\cdot\bn=Q_{gn}\phi$ on $\partial
\Omega$ and the following equation:
\begin{equation}\label{wg}
(\Delta_w u_h,\Delta_w v)_h+s(u_h,v)=(f,v_0), \quad\forall\
v=\{v_0,v_b,\bv_{g}\}\in V_h^0,
\end{equation}
where $Q_b \zeta$ is the standard $L^2$ projection onto the trace
space $\Lambda_h$ and $Q_{gn}\phi$ is the $L^2$ projection onto the
normal component of the gradient trace space $\Upsilon_h$.
\end{algorithm}

\smallskip

The following is a useful observation concerning the finite element
space $V_h^0$.

\begin{lemma}\label{3bar-is-a-norm}
For any $v\in V_h^0$, let $\3bar v \3bar$ be given as follows
\begin{equation}\label{3barnorm}
\3bar v\3bar^2=(\Delta_w v,\Delta_w v)_h+s(v,\;v).
\end{equation}
Then, $\3bar\cdot\3bar$ defines a norm in the linear space $V_h^0$.
\end{lemma}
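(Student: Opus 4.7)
The plan is to verify the three defining properties of a norm on $V_h^0$. Non-negativity and positive homogeneity are immediate from the definition (\ref{3barnorm}), since both $(\Delta_w v,\Delta_w v)_h$ and $s(v,v)$ are non-negative quadratic forms built from the bilinear forms $(\Delta_w u,\Delta_w v)_h$ and $s(u,v)$. These same two bilinear forms provide a symmetric positive semidefinite form on $V_h^0$, so the triangle inequality follows at once from Cauchy--Schwarz. The essential content of the lemma is positive-definiteness: $\3bar v\3bar=0\Rightarrow v=0$.

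Assume $\3bar v\3bar=0$. Then $s(v,v)=0$ yields the strong pointwise identities $v_0=v_b$ and $\nabla v_0=\bv_g$ on $\partial T$ for every $T\in\T_h$, while $(\Delta_w v,\Delta_w v)_h=0$ yields $\Delta_w v\equiv 0$ on each $T$. I will plug the two identities into the definition (\ref{dwl}) of the discrete weak Laplacian, obtaining
\begin{equation*}
0=(v_0,\Delta\varphi)_T-\l v_0,\nabla\varphi\cdot\bn\r_\pT +\l\nabla v_0\cdot\bn,\varphi\r_\pT\qquad\forall\varphi\in P_{k-2}(T),
\end{equation*}
and then integrate by parts twice on the right-hand side to convert it into $(\Delta v_0,\varphi)_T=0$. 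Since $v_0\in P_k(T)$ implies $\Delta v_0\in P_{k-2}(T)$, this forces $\Delta v_0=0$ elementwise.

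Next I promote this elementwise information to a global statement. The single-valuedness of $v_b$ on $\E_h^0$ together with $v_0|_\pT=v_b$ shows $v_0$ is continuous across every interior face; the single-valuedness of $\bv_g\cdot\bn$ on $\E_h^0$ together with $\nabla v_0\cdot\bn=\bv_g\cdot\bn$ shows the normal component of $\nabla v_0$ is also continuous. Continuity of $v_0$ already forces continuity of its tangential derivatives on each face, so altogether $v_0\in C^1(\Omega)$. A piecewise polynomial of class $C^1$ belongs to $H^2(\Omega)$, and since $\Delta v_0=0$ on each element, it is harmonic on all of $\Omega$ in the distributional sense. The $V_h^0$ boundary conditions give $v_0|_{\partial\Omega}=v_b|_{\partial\Omega}=0$, so Green's identity yields $\|\nabla v_0\|^2=-(\Delta v_0,v_0)=0$. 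Hence $v_0\equiv 0$, and then $v_b=v_0|_\pT=0$ and $\bv_g=\nabla v_0|_\pT=0$ on every face, proving $v=0$.

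The delicate step I expect to be the main obstacle is precisely the transition in the last paragraph: asserting that a piecewise polynomial which is elementwise harmonic and whose boundary traces and normal-gradient traces are forced to agree with globally single-valued quantities actually lies in $H^2(\Omega)$ and is globally harmonic. Everything else reduces to bookkeeping on the bilinear form, but the $H^2$ regularity hinges on simultaneously exploiting $s(v,v)=0$ and the single-valuedness built into the definition of $V_h$ on $\E_h^0$.
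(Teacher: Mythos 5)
Your proof is correct and follows essentially the same route as the paper: both use $s(v,v)=0$ to get $v_0=v_b$ and $\nabla v_0=\bv_g$ on each $\partial T$, substitute these into the definition (\ref{dwl}) of $\Delta_{w}$ to deduce $\Delta v_0=0$ elementwise, and then upgrade to global harmonicity with zero Dirichlet data to conclude $v\equiv 0$. Your extra care at the gluing step (single-valuedness of $v_b$ and $\bv_g$ on ${\cal E}_h^0$ giving $C^1$ continuity, hence $H^2$ membership and distributional harmonicity) is a sound and welcome elaboration of what the paper asserts in one terse sentence.
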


\begin{proof}
We shall only verify the positivity property for $\3bar\cdot\3bar$.
To this end, assume that $\3bar v\3bar=0$ for some
$\{v_0,v_b,\bv_{g}\}\in V_h^0$. It follows that $\Delta_w v=0,\
v_0=v_b,$ and $\nabla v_0=\bv_{g}$ on each element $T$ or $\pT$, as
appropriate. We claim that $\Delta v_0=0$ holds true locally on each
element $T$. To this end, for any $\varphi\in P_{k-2}(T)$ we use
$\Delta_w v=0$ and the definition (\ref{dwl}) to obtain
\begin{eqnarray}
0&=&(\Delta_{w} v, \varphi)_T\\
&=& (v_0, \Delta\varphi)_T-\l v_b,\nabla\varphi\cdot\bn\r_\pT +\l
\bv_g\cdot\bn, \
\varphi\r_\pT\nonumber\\
&=&(\Delta v_0, \varphi)_T+\l v_0-v_b,\nabla\varphi\cdot\bn\r_\pT
+\l \bv_g\cdot\bn-\nabla v_0\cdot\bn,\varphi\r_\pT\nonumber\\
&=&(\Delta v_0, \varphi)_T,\label{april14.01}
\end{eqnarray}
where we have used the fact that $v_0-v_b=0$ and $\nabla
v_0-\bv_g=0$ in the last equality. The identity (\ref{april14.01})
implies that $\Delta v_0=0$ holds true locally on each element $T$.
This, together with $v_0=v_b$ and $\nabla v_0=\bv_{g}$ on $\pT$,
shows that $v$ is a smooth harmonic function globally on $\Omega$.
The boundary condition of $v_b=0$ then implies that $v\equiv 0$ on
$\Omega$, which completes the proof.
\end{proof}

\begin{lemma}
The weak Galerkin finite element scheme (\ref{wg}) has a unique
solution.
\end{lemma}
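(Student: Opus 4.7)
The plan is to reduce the uniqueness/existence question for the linear system (\ref{wg}) to a standard energy argument using the norm $\3bar\cdot\3bar$ established in Lemma \ref{3bar-is-a-norm}. Since (\ref{wg}) is a square linear system on the finite-dimensional space $V_h^0$ (after lifting the nonhomogeneous boundary data $Q_b\zeta$ and $Q_{gn}\phi$ into an arbitrary element of $V_h$ satisfying them and reformulating the problem for the difference in $V_h^0$), it suffices to prove uniqueness; existence then follows automatically from the rank-nullity theorem.

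To prove uniqueness, I would suppose that $u_h^{(1)}$ and $u_h^{(2)}$ are two solutions of (\ref{wg}) with the same data $f$, $\zeta$, $\phi$. Their difference $w_h := u_h^{(1)}-u_h^{(2)}$ lies in $V_h^0$ since the boundary traces $u_b=Q_b\zeta$ and $\bu_g\cdot\bn = Q_{gn}\phi$ cancel, and $w_h$ satisfies the homogeneous equation
\begin{equation*}
(\Delta_w w_h,\Delta_w v)_h + s(w_h,v) = 0 \qquad \forall v\in V_h^0.
\end{equation*}

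The key step is to test against $v = w_h\in V_h^0$, which gives
\begin{equation*}
(\Delta_w w_h,\Delta_w w_h)_h + s(w_h,w_h) = 0,
\end{equation*}
i.e., $\3bar w_h\3bar^2 = 0$. By the norm property established in Lemma \ref{3bar-is-a-norm}, this forces $w_h \equiv 0$, so $u_h^{(1)} = u_h^{(2)}$. Combined with the square-system observation above, this yields existence and uniqueness of the discrete solution.

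I expect no substantive obstacle here: the argument is the standard Lax--Milgram / linear algebra dichotomy, and all the work has already been carried out in Lemma \ref{3bar-is-a-norm}, where positivity of $\3bar\cdot\3bar$ on $V_h^0$ was the nontrivial point (requiring the harmonic-function argument and the vanishing boundary trace $v_b=0$). The only minor care needed is to verify that lifting the boundary data does not affect the structure, which is straightforward since $V_h^0$ is precisely the kernel of the trace map into $\Lambda_h\times\Upsilon_h$ restricted to $\partial\Omega$.
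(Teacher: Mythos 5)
Your proposal is correct and follows essentially the same route as the paper: take the difference of two solutions, observe it lies in $V_h^0$ and satisfies the homogeneous equation, test with the difference itself to get $\3bar\cdot\3bar$-norm zero, and invoke Lemma \ref{3bar-is-a-norm}. The only addition is your explicit remark that existence follows from uniqueness for the square finite-dimensional system, which the paper leaves implicit.
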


\begin{proof} Let $\bu_h^{(1)}$ and $\bu_h^{(2)}$ be two solutions
of the weak Galerkin finite element scheme (\ref{wg}). It is clear
that the difference $\be_h=\bu_h^{(1)}-\bu_h^{(2)}$ is a finite
element function in $V_h^0$ satisfying
\begin{equation}\label{wg:extension}
(\Delta_w \be_h,\Delta_w v)_h+s(\be_h,v)=0, \quad\forall\
v=\{v_0,v_b,\bv_{g}\}\in V_h^0.
\end{equation}
By setting $v=\be_h$ in (\ref{wg:extension}) we obtain
\[
(\Delta_w \be_h,\Delta_w \be_h)_h+s(\be_h, \be_h)=0.
\]
It follows from Lemma \ref{3bar-is-a-norm} that $\be_h\equiv 0$.
This shows that $\bu_h^{(1)} = \bu_h^{(2)}$.
\end{proof}

\subsection{Algorithm II}

Here we describe another weak Galerkin finite element scheme that
has less number of unknowns than (\ref{wg}). This second WG-FEM
scheme is formulated in a subspace of $V_h$, denoted by $\tilde
V_h$, that uses only the normal component of the vector $\bv_g$ for
any $v=\{v_0,v_b,\bv_g\} \in V_h$. To be more precise, let us
introduce a set of normal directions on ${\cal E}_h$ as follows
\begin{equation}\label{thanks.101}
{\cal D}_h = \{\bn_e: \mbox{ $\bn_e$ is unit and normal to $e$},\
e\in {\cal E}_h \}.
\end{equation}
The weak Galerkin finite element space $\tilde V_h$ is given as
follows
\begin{equation}\label{tildevh}
\tilde V_h=\{v=\{v_0,v_b, v_{g}\bn_e\}:\ v_0\in P_{k}(T), v_b\in
P_{k}(e), v_{g}\in P_{k-1}(e), e\subset\partial T\},
\end{equation}
where $v_g$ can be viewed as an approximation of $\nabla
v\cdot\bn_e$. Denote by $\tilde V_h^0$ the subspace of $V_h$ with
vanishing traces; i.e.,
$$
\tilde V_h^0=\{v=\{v_0,v_b,v_{g}\bn_e\}\in V_h, {v_b}|_e=0,\
{v_{g}}|_e=0,\ e\subset\partial T\cap\partial\Omega\}.
$$

For any $u_h=\{u_0,u_b,u_{g}\bn_e\}$ and $v=\{v_0,v_b,v_{g}\bn_e\}$
in $\tilde V_h$, we introduce a bilinear form as follows
\[
\tilde s(u_h, v):=\sum_{T\in\T_h} h_T^{-1}\langle \nabla
u_0\cdot\bn_e-u_{g}, \nabla v_0\cdot\bn_e-v_{g}\rangle_\pT +
\sum_{T\in\T_h} h_T^{-3}\langle u_0-u_{b}, v_0-v_{b}\rangle_\pT.
\]

\smallskip
\begin{algorithm}
A numerical approximation for (\ref{pde})-(\ref{bc-n}) can be
obtained by seeking $u_h=\{u_0,u_b,u_{g}\bn_e\}\in \tilde V_h$
satisfying $u_b=Q_b \zeta$ and $u_{g}=Q_{gn}\phi$ on $\partial
\Omega$ and the following equation:
\begin{equation}\label{wg-2}
(\Delta_w u_h,\Delta_w v)_h+\tilde s(u_h,v)=(f,v_0), \quad\forall\
v=\{v_0,v_b,v_{g}\bn_e\}\in \tilde V_h^0,
\end{equation}
where $Q_b \zeta$ is the standard $L^2$ projection onto the trace
space $\Lambda_h$ and $Q_{gn}\phi$ is the $L^2$ projection onto the
normal component of the gradient trace space $\Upsilon_h$.
\end{algorithm}

\smallskip

Like (\ref{wg}), the WG-FEM scheme (\ref{wg-2}) can be proved to
have one and only one solution in the corresponding finite element
space. Details are left to interested readers for a verification.

\section{$L^2$ Projections and Approximation Properties}\label{Section:L2projections}
For each element $T$, denote by $Q_0$ the $L^2$ projection onto
$P_{k}(T), k\ge 2$. For each edge/face $e\subset \partial T$, denote
by $Q_b$ and $\bQ_g$ the $L^2$ projection onto $P_{k}(e)$ and
$[P_{k-1}(e)]^d$, respectively. Now for any $u\in H^2(\Omega)$, we
can define a projection into the finite element space $V_h$ such
that on the element $T$
$$
Q_h u = \{Q_0 u,Q_bu,\bQ_{g} (\nabla u)\}.
$$
In addition, denote by $\bbQ_h$ the local $L^2$ projection onto
$P_{k-2}(T)$.

\begin{lemma}\label{Lemma:commutativity}
On each element $T\in \T_h$, the $L^2$ projections $Q_h$ and
$\bbQ_h$ satisfy the following commutative property with the
Laplacian $\Delta$ and the discrete weak Laplacian $\Delta_w$:
\begin{equation}\label{key-Laplacian}
\Delta_{w} (Q_h u) = \bbQ_h (\Delta u)
\end{equation}
for all $u\in H^2(T)$.
\end{lemma}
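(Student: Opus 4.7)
The plan is to test the claimed identity $\Delta_w(Q_h u) = \bbQ_h(\Delta u)$ against an arbitrary polynomial $\varphi\in P_{k-2}(T)$, unfold both sides via their defining characterizations, and reduce the problem to classical integration by parts on each element. Since both $\Delta_w(Q_h u)$ and $\bbQ_h(\Delta u)$ are polynomials in $P_{k-2}(T)$, pointwise equality follows from equality of their duality pairings against all such $\varphi$.

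First I would apply the definition (\ref{dwl}) of the discrete weak Laplacian to the weak function $Q_h u = \{Q_0 u, Q_b u, \bQ_g(\nabla u)\}$, obtaining
\[
(\Delta_w(Q_h u), \varphi)_T = (Q_0 u, \Delta\varphi)_T - \langle Q_b u, \nabla\varphi\cdot\bn\rangle_{\pT} + \langle \bQ_g(\nabla u)\cdot\bn,\varphi\rangle_{\pT}.
\]
The key reduction is to check that the three $L^2$ projections drop out of their respective pairings. Since $\varphi\in P_{k-2}(T)$, one has $\Delta\varphi\in P_{k-4}(T)\subset P_k(T)$ (trivially zero when $k<4$), so $(Q_0 u,\Delta\varphi)_T = (u,\Delta\varphi)_T$ by definition of $Q_0$. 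Similarly, on each edge/face $e\subset\pT$, the trace $\nabla\varphi\cdot\bn$ has degree at most $k-3$ and the trace $\varphi$ has degree at most $k-2$; these lie in $P_k(e)$ and $[P_{k-1}(e)]^d$ respectively, so $Q_b$ and $\bQ_g$ may be removed. Thus the right-hand side reduces to
\[
(u,\Delta\varphi)_T - \langle u,\nabla\varphi\cdot\bn\rangle_{\pT} + \langle \nabla u\cdot\bn,\varphi\rangle_{\pT}.
\]

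Finally I would recognize the displayed expression as exactly Green's second identity applied to the smooth enough pair $(u,\varphi)$ with $u\in H^2(T)$ and $\varphi\in P_{k-2}(T)$: integrating by parts twice yields $(u,\Delta\varphi)_T = (\Delta u,\varphi)_T - \langle\nabla u\cdot\bn,\varphi\rangle_{\pT} + \langle u,\nabla\varphi\cdot\bn\rangle_{\pT}$, so the boundary contributions cancel and we are left with $(\Delta_w(Q_h u),\varphi)_T = (\Delta u,\varphi)_T = (\bbQ_h(\Delta u),\varphi)_T$, the last equality by the definition of $\bbQ_h$ and $\varphi\in P_{k-2}(T)$. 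Since $\varphi$ was arbitrary in $P_{k-2}(T)$ and both sides of (\ref{key-Laplacian}) belong to this space, the identity follows.

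There is no substantive obstacle here; the only point requiring care is bookkeeping the polynomial degrees so that each projection is absorbed against its pairing partner, which works uniformly for all $k\ge 2$ (with the understanding that $\Delta\varphi$ and $\nabla\varphi$ simply vanish in the low-order cases $k=2,3$). The argument is a direct calculation once the definitions of $\Delta_w$, $Q_h$, and $\bbQ_h$ are combined with the Green's identity for $H^2$ functions.
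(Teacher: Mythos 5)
Your proposal is correct and follows essentially the same route as the paper's proof: apply the definition of $\Delta_{w,k-2,T}$ to $Q_hu$, strip each $L^2$ projection against its polynomial pairing partner, and conclude by Green's identity and the definition of $\bbQ_h$. The only difference is that you make the degree bookkeeping explicit where the paper says ``it is not hard to see,'' which is a welcome clarification rather than a deviation.
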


\begin{proof}
For any $\tau\in P_{k-2}(T)$, it is not hard to see that
\begin{eqnarray*}
(\Delta_{w} Q_h u,\tau)_T &=& (Q_0 u,\Delta\tau)_T + \langle
\bQ_{g}(\nabla u)\cdot\bn, \tau\rangle_{\pT}-\langle Q_b u,\nabla\tau\cdot\bn \rangle_{\pT}\\
&=&(u, \Delta\tau)_T + \langle \nabla
u\cdot\bn,\tau\rangle_{\partial T}-\langle u,\nabla\tau\cdot\bn
\rangle_{\pT}
\\
&=&(\Delta u,\tau)_T\\
&=&(\bbQ_h\Delta u,\tau)_T,
\end{eqnarray*}
which implies the desired identity (\ref{key-Laplacian}).
\end{proof}

The commutative property (\ref{key-Laplacian}) indicates that the
discrete weak Laplacian of the $L^2$ projection of smooth functions
is a good approximation of the Laplacian of the function itself in
the classical sense. This is a nice and useful property of the
discrete weak Laplacian in application to algorithm design and
analysis.

\medskip

The following lemma provides some approximation properties for the
projection operators $Q_h$ and $\bbQ_h$.

\begin{lemma}\label{l1}
Let $\T_h$ be a finite element partition of $\Omega$ satisfying the
shape regularity assumption as specified in \cite{wy-mixed}. Then,
for any $0\le s\le 2$ and $2\le m \le k$ we have
\begin{eqnarray}
&&\sum_{T\in\T_h} h_T^{2s}\|u-Q_0u\|_{s,T}^2 \le Ch^{2(m+1)}
\|u\|^2_{m+1},\label{Qh}\\
&&\sum_{T\in\T_h} h_T^{2s}\|\Delta u-\bbQ_h\Delta u\|^2_{s,T} \le
Ch^{2(m-1)}\|u\|^2_{m+1}.\label{Rh}
\end{eqnarray}
\end{lemma}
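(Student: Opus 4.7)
The plan is standard: derive the bounds elementwise on each $T\in\T_h$ and then sum over the partition. The nontrivial part is the local $L^2$-projection approximation estimate on a single shape-regular polytope, since $T$ need not be a simplex and the classical single reference-element pullback argument does not apply directly. I would instead appeal to the polytopal approximation theory collected in the appendix (and in \cite{wy-mixed}), which decomposes $T$ into sub-simplices of uniformly controlled aspect ratio and then applies a Bramble--Hilbert style argument.

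For (\ref{Qh}), the appendix supplies the local $L^2$-projection bound
\[
\|u-Q_0 u\|_{s,T} \le C\,h_T^{\,m+1-s}\,\|u\|_{m+1,T},\qquad 0\le s\le 2,\ 2\le m\le k.
\]
Multiplying by $h_T^{2s}$, squaring, and summing over $T\in\T_h$ (using $h_T\le h$) yields
\[
\sum_{T\in\T_h} h_T^{2s}\|u-Q_0 u\|_{s,T}^2 \le C\sum_{T\in\T_h} h_T^{2(m+1)}\|u\|_{m+1,T}^2 \le C\,h^{2(m+1)}\|u\|_{m+1}^2,
\]
which is precisely (\ref{Qh}).

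For (\ref{Rh}), I would apply the analogous local estimate to $\Delta u\in H^{m-1}(T)$ with $\bbQ_h$ the $L^2$ projection onto $P_{k-2}(T)$. Since $m\le k$, there is enough room in the polynomial degree, so the polytopal approximation result gives
\[
\|\Delta u - \bbQ_h\Delta u\|_{s,T} \le C\,h_T^{\,m-1-s}\,\|\Delta u\|_{m-1,T} \le C\,h_T^{\,m-1-s}\,\|u\|_{m+1,T}.
\]
Multiplying by $h_T^{2s}$, squaring, and summing over $T$ produces (\ref{Rh}).

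The main obstacle is, as indicated, establishing the local projection estimate on a general shape-regular polytope, because one cannot simply pull back to a single reference element; this technical ingredient is exactly the reason the appendix is needed. Once that local bound is in hand, both (\ref{Qh}) and (\ref{Rh}) follow by straightforward elementwise summation.
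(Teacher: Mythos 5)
The paper does not actually prove this lemma---it states only that the proof is similar to that of Lemma 5.1 in \cite{wy-mixed} and omits the details---and your argument is precisely the standard one that citation stands for: a local Bramble--Hilbert/projection estimate on each shape-regular polytope, applied to $u$ for (\ref{Qh}) and to $\Delta u$ for (\ref{Rh}), followed by elementwise summation. Your proposal is therefore correct and consistent with the paper's intended route; the only cosmetic difference is that the machinery of \cite{wy-mixed} and the appendix obtains the local estimate via the circumscribed shape-regular simplex $S(T)$ of Assumption \textbf{A4} together with the domain inverse inequality, rather than by decomposing $T$ into sub-simplices.
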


\begin{proof}
The proof of this lemma is similar to that of Lemma 5.1 in
\cite{wy-mixed}, and the details are thus omitted.
\end{proof}

\smallskip

Using the trace inequality (\ref{App:trace}) in the Appendix with
$p=2$, we can derive the following estimates which are useful in the
convergence analysis for the weak Galerkin finite element schemes
(\ref{wg}) and (\ref{wg-2}).

\begin{lemma}\label{l2}
Let $2\le m\le k$, $w\in H^{\max\{m+1,4\}}(\Omega)$, and $v\in V_h$.
There exists a constant $C$ such that the following estimates hold
true:
\begin{eqnarray}
&  \left(\sum_{T\in\T_h} h_T\|\Delta w-\bbQ_h\Delta w\|_{\partial
T}^2\right)^{\frac12}
\leq C h^{m-1}\|w\|_{m+1},\label{mmm1}\\
&  \left(\sum_{T\in\T_h} h_T^3\|\nabla(\Delta w-\bbQ_h\Delta
w)\|_\pT^2\right)^{\frac12} \leq Ch^{m-1}(\|w\|_{m+1}
+h\delta_{m,2}\|w\|_4),
\label{mmm2}\\
&  \left(\sum_{T\in\T_h} h_T^{-1}\| \nabla (Q_0w)- \bQ_g(\nabla
w)\|_\pT^2\right)^{\frac12}\le Ch^{m-1}\|w\|_{m+1},\label{mmm3} \\
&  \left(\sum_{T\in\T_h}
h_T^{-3}\|Q_0w-Q_bw\|_\pT^2\right)^{\frac12}\le
Ch^{m-1}\|w\|_{m+1}.\label{mmm4}
\end{eqnarray}
Here $\delta_{i,j}$ is the usual Kronecker's delta with value $1$
when $i=j$ and value $0$ otherwise.
\end{lemma}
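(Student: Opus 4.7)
The plan is to combine the trace inequality (\ref{App:trace}) with $p=2$ from the Appendix,
\[
\|\varphi\|_{\partial T}^2 \le C\bigl(h_T^{-1}\|\varphi\|_T^2 + h_T\|\nabla\varphi\|_T^2\bigr),
\]
with the volume approximation estimates (\ref{Qh}) and (\ref{Rh}) of Lemma \ref{l1}, applied to four suitable choices of $\varphi$. In each case I would plug in the relevant $\varphi$, multiply by the stated power of $h_T$, sum over $T\in\T_h$, and then recognise the right-hand side as one of the volume bounds with the appropriate index $s$.

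First, for (\ref{mmm1}) the choice $\varphi = \Delta w - \bbQ_h\Delta w$ with multiplier $h_T$ reduces the claim to (\ref{Rh}) with $s=0$ and $s=1$. For (\ref{mmm2}), taking $\varphi = \nabla(\Delta w - \bbQ_h\Delta w)$ with multiplier $h_T^3$ reduces it to (\ref{Rh}) with $s=1$ and $s=2$. The delicate point is the lowest order case $m=2$: here $\bbQ_h\Delta w$ is piecewise constant, so $\nabla^2(\bbQ_h\Delta w)\equiv 0$ and the $s=2$ contribution reduces to $h_T^4\|\nabla^2\Delta w\|_T^2 \le h_T^4\|w\|_{4,T}^2$, which forces the extra $H^4$-regularity absorbed into the term $h\delta_{m,2}\|w\|_4$. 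For (\ref{mmm3}) and (\ref{mmm4}), the key preliminary observation is that on every $e\subset\partial T$ the polynomial traces $\nabla(Q_0w)|_e \in [P_{k-1}(e)]^d$ and $Q_0w|_e \in P_k(e)$ are left invariant by $\bQ_g$ and $Q_b$ respectively, so on $e$,
\[
\nabla(Q_0w) - \bQ_g(\nabla w) = \bQ_g\bigl(\nabla(Q_0w - w)\bigr), \qquad Q_0w - Q_bw = Q_b(Q_0w - w).
\]
After using $L^2$-stability of $\bQ_g$ and $Q_b$ on $e$ to discard the outer projections, the trace inequality applied to $\nabla(Q_0w-w)$ and $Q_0w-w$ combined with (\ref{Qh}) at $s=1,2$ and at $s=0,1$ respectively delivers the two estimates.

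The only genuinely non-routine step is the borderline $m=2$ case of (\ref{mmm2}) just described; everywhere else the argument is bookkeeping that arranges the powers of $h_T$ in the outer multipliers to pair correctly with the order-$s$ approximation bounds of Lemma \ref{l1}.
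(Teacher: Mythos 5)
Your proposal matches the paper's proof essentially line for line: each of the four bounds is obtained by applying the trace inequality (\ref{App:trace}) with $p=2$ and then invoking (\ref{Qh}) or (\ref{Rh}) at the appropriate orders $s$, with the projection identities $\bQ_g(\nabla Q_0w)=\nabla Q_0w$ and $Q_b(Q_0w)=Q_0w$ on each face used to reduce (\ref{mmm3}) and (\ref{mmm4}) to estimates for $Q_0w-w$, exactly as the paper does. The only minor quibble is that your explanation of the $\delta_{m,2}$ term via $\nabla^2(\bbQ_h\Delta w)\equiv 0$ is literally valid only when $k=2$; for $m=2<k$ one instead uses $H^2$-stability of the projection $\bbQ_h$, but the conclusion and the role of the extra $h\|w\|_4$ term are the same as in the paper.
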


\begin{proof}
To derive (\ref{mmm1}), we use the trace inequality
(\ref{App:trace}) and the estimate (\ref{Rh}) to obtain
\begin{eqnarray*}
&& \sum_{T\in\T_h} h_T\|\Delta w-\bbQ_h\Delta
w\|_\pT^2\\
\le && C\sum_{T\in\T_h} \left(\|\Delta w-\bbQ_h\Delta
w\|_T^2+h_T^2\|\nabla(\Delta w-\bbQ_h\Delta w)
\|_T^2\right)\\
&&\le C h^{2m-2}\|w\|_{m+1}^2.
\end{eqnarray*}

As to (\ref{mmm2}), we use the trace inequality (\ref{App:trace})
and the estimate (\ref{Rh}) to obtain
\begin{eqnarray*}
&& \sum_{T\in\T_h}h_T^3\|\nabla(\Delta w-\bbQ_h\Delta w)\|_\pT^2 \\
\le && C\sum_{T\in\T_h}\left( h_T^2\|\nabla(\Delta w-\bbQ_h\Delta
w)\|_T^2+h_T^4\|\nabla^2(\Delta w-\bbQ_h\Delta w)\|_T^2\right)\\
\le && Ch^{2m-2}(\|w\|_{m+1}^2 +h^2\delta_{m,2}\|w\|_4^2).
\end{eqnarray*}

As to (\ref{mmm3}), we have from the definition of $\bQ_{g}$, the
trace inequality (\ref{App:trace}), and the estimate (\ref{Qh}) that
\begin{eqnarray*}
&& \sum_{T\in\T_h} h_T^{-1}\| \nabla (Q_0w)- \bQ_g(\nabla w)\|_\pT^2 \\
\le && \sum_{T\in\T_h}h_T^{-1}\|\nabla Q_0 w-\nabla w\|_\pT^2\\
\le && C\sum_{T\in\T_h}\left(h_T^{-2}\|\nabla Q_0 w-\nabla w\|_T^2
+ \|\nabla Q_0 w-\nabla w\|_{1,T}^2\right)\\
\le && C h^{2m-2}\|w\|_{m+1}^2.
\end{eqnarray*}

Finally, we use the definition of $Q_b$ and the trace inequality
(\ref{App:trace}) to obtain
\begin{eqnarray*}
&& \sum_{T\in\T_h}h_T^{-3}\|Q_0 w-Q_b w\|_\pT^2
\le \sum_{T\in\T_h}h_T^{-3}\|Q_0 w-w\|_\pT^2\\
\le && C\sum_{T\in\T_h}\left(h_T^{-4}\|Q_0 w-w\|_T^2 +
h_T^{-2}\|\nabla(Q_0 w-w)\|_T^2 \right)\\
\le && C h^{2m-2}\|w\|_{m+1}^2.
\end{eqnarray*}
This completes the proof of (\ref{mmm4}), and hence the lemma.
\end{proof}

\section{An Error Estimate in $H^2$}\label{Section:error-analysis}

The goal here is to establish an error estimate for the WG-FEM
solution $u_h$ arising from (\ref{wg}) and (\ref{wg-2}). For
simplicity, we will focus on the error analysis for (\ref{wg}) only;
the analysis can be easily modified to cover the WG-FEM scheme
(\ref{wg-2}) without any difficulty.

First of all, let us derive an error equation for the weak Galerkin
finite element solution.

\begin{lemma}\label{Lemma:error-equation}
Let $u$ and $u_h=\{u_0,u_b,\bu_g\}\in V_h$ be the solution of
(\ref{pde})-(\ref{bc-n}) and (\ref{wg}), respectively. Denote by
$$
e_h=Q_hu - u_h
$$
the error function between the $L^2$ projection of $u$ and its weak
Galerkin finite element approximation. Then the error function $e_h$
satisfies the following equation
\begin{eqnarray}
(\Delta_{w} e_h, \Delta_w v)_h+s(e_h,v)&=&\sum_{T\in\T_h}\langle
\Delta u-\bbQ_h\Delta u,
(\nabla v_0-\bv_{g})\cdot\bn\rangle_{\pT}\nonumber\\
& -& \sum_{T\in\T_h}\langle\nabla(\Delta u-\bbQ_h\Delta u)\cdot\bn,
v_0-v_b\rangle_{\pT}+s(Q_hu,v)\label{ee}
\end{eqnarray}
for all $v\in V_h^0$.
\end{lemma}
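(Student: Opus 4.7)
The plan is to derive the identity by rewriting $(\Delta_w Q_h u,\Delta_w v)_h + s(Q_h u,v)$ in closed form, matching it against $(f,v_0)$ via integration by parts and the PDE $\Delta^2 u=f$, and then subtracting the scheme (\ref{wg}) to isolate the error function $e_h$.

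The first step is a local reformulation of (\ref{dwl}). Integrating $(v_0,\Delta\varphi)_T$ by parts twice on each element yields, for any $\varphi\in P_{k-2}(T)$,
\[
(\Delta_w v,\varphi)_T = (\Delta v_0,\varphi)_T + \langle v_0-v_b,\nabla\varphi\cdot\bn\rangle_\pT - \langle (\nabla v_0-\bv_g)\cdot\bn,\varphi\rangle_\pT.
\]
Taking $\varphi=\bbQ_h\Delta u$ and invoking the commutativity $\Delta_w Q_h u=\bbQ_h\Delta u$ from Lemma \ref{Lemma:commutativity} rewrites the left-hand side as $(\Delta_w v,\Delta_w Q_h u)_T$. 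Since $\Delta v_0\in P_{k-2}(T)$, one can replace $\bbQ_h\Delta u$ by $\Delta u$ in the volume term, and then integrate $(\Delta v_0,\Delta u)_T$ by parts once more to obtain $(v_0,\Delta^2 u)_T+\langle\nabla v_0\cdot\bn,\Delta u\rangle_\pT-\langle v_0,\nabla\Delta u\cdot\bn\rangle_\pT$; the volume contribution becomes $(f,v_0)_T$ by (\ref{pde}).

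Summing over $T\in\T_h$, the next task is to collapse the interface sums. Because $v\in V_h^0$ has $v_b$ single-valued on ${\cal E}_h^0$ and zero on $\partial\Omega$, while $\bv_g$ is single-valued with $\bv_g\cdot\bn=0$ on $\partial\Omega$, and because $\Delta u$ and $\nabla\Delta u$ are continuous across element interfaces (for $u$ sufficiently smooth), the normal orientations of adjacent elements give
\[
\sum_{T\in\T_h}\langle v_b,\nabla\Delta u\cdot\bn\rangle_\pT = 0,\qquad \sum_{T\in\T_h}\langle \bv_g\cdot\bn,\Delta u\rangle_\pT = 0.
\]
This allows the free insertion of $-v_b$ beside $v_0$ and of $-\bv_g$ beside $\nabla v_0$, so that the $\Delta u$ pieces combine with the $\bbQ_h\Delta u$ pieces to form the differences $\Delta u-\bbQ_h\Delta u$ and $\nabla(\Delta u-\bbQ_h\Delta u)$ that appear in (\ref{ee}).

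Finally, subtracting the weak Galerkin equation (\ref{wg}), which reads $(\Delta_w u_h,\Delta_w v)_h=(f,v_0)-s(u_h,v)$, from the resulting identity for $Q_h u$ and adding $s(e_h,v)=s(Q_hu,v)-s(u_h,v)$ to both sides produces (\ref{ee}); the two $s(u_h,v)$ contributions cancel cleanly, leaving the single remainder $s(Q_h u,v)$. The main obstacle is really only the accounting in the third paragraph—keeping track of signs and of which boundary sums vanish under single-valuedness—since the rest is direct application of Lemma \ref{Lemma:commutativity} and the definition (\ref{dwl}).
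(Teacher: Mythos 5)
Your proposal is correct and follows essentially the same route as the paper's own proof: both rewrite $(\Delta_w Q_h u,\Delta_w v)_T$ via the definition (\ref{dwl}) and integration by parts, use the commutativity $\Delta_w Q_h u=\bbQ_h\Delta u$ together with $\Delta v_0\in P_{k-2}(T)$ to pass from $\bbQ_h\Delta u$ to $\Delta u$ in the volume term, invoke $\Delta^2u=f$ and the single-valuedness of $v_b$ and $\bv_g\cdot\bn$ to insert the interface differences, and finally subtract (\ref{wg}). The only difference is organizational (you specialize a general reformulation of (\ref{dwl}) rather than deriving the intermediate identity (\ref{key}) separately), which does not change the substance.
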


\begin{proof}
Using (\ref{dwl}), the integration by parts, and the fact that
$\Delta_w Q_hu = \bbQ_h(\Delta u)$ we obtain
\begin{eqnarray*}
& & (\Delta_{w} Q_h u, \Delta_w v)_T \\
&=& (v_0, \Delta(\Delta_w Q_h
u))_T + \langle
\bv_{g}\cdot\bn,\Delta_w Q_h u\rangle_{\pT}-\langle v_b, \nabla(\Delta_w Q_h u)\cdot\bn \rangle_{\pT}\nonumber\\
&=&(\Delta v_0,\Delta_w Q_h u)_T+\langle v_0, \nabla(\Delta_w Q_h
u)\cdot\bn\rangle_{\pT}-\langle\nabla v_0\cdot\bn, \Delta_w Q_h
u\rangle_{\pT}\nonumber\\
&& +\langle
\bv_{g}\cdot\bn, \Delta_w Q_h u\rangle_{\pT}-\langle v_b, \nabla(\Delta_w Q_h u)\cdot\bn \rangle_{\pT}\nonumber\\
&=&(\Delta v_0,\Delta_w Q_h u)_T+\langle v_0-v_b, \nabla(\Delta_w
Q_h u)\cdot\bn\rangle_{\pT}-\langle (\nabla v_0-\bv_{g})\cdot\bn,
\Delta_w Q_h
u\rangle_{\pT}\nonumber\\
&=&(\Delta v_0,\bbQ_h\Delta u)_T+\langle v_0-v_b,
\nabla(\bbQ_h\Delta u)\cdot\bn\rangle_{\pT}-\langle (\nabla
v_0-\bv_{g})\cdot\bn,
\bbQ_h\Delta u\rangle_{\pT}\nonumber\\
&=&(\Delta u,\Delta v_0)_T+\langle v_0-v_b, \nabla(\bbQ_h\Delta
u)\cdot\bn\rangle_{\pT} -\langle (\nabla v_0-\bv_{g})\cdot\bn,
\bbQ_h \Delta u\rangle_{\pT},
\end{eqnarray*}
which implies that
\begin{eqnarray}
(\Delta u,\Delta v_0)_T&=&(\Delta_{w} Q_h u, \Delta_w v)_T-\langle v_0-v_b,
\nabla(\bbQ_h \Delta u)\cdot\bn\rangle_{\pT}\nonumber\\
&+&\langle (\nabla v_0-\bv_{g})\cdot\bn, \bbQ_h \Delta
u\rangle_{\pT}.\label{key}
\end{eqnarray}
Next, it follows from the integration by parts that
$$
(\Delta u,\Delta v_0)_T = (\Delta^2u, v_0)_T+\langle \Delta u,
\nabla v_0\cdot\bn\rangle_{\pT} - \langle\nabla(\Delta u)\cdot\bn,
v_0\rangle_{\pT}.
$$
Summing over all $T$ and then using the identity $(\Delta^2u,
v_0)=(f,v_0)$ we arrive at
\begin{eqnarray*}
\sum_{T\in\T_h}(\Delta u,\Delta v_0)_T =(f,v_0)
+\sum_{T\in\T_h}\langle \Delta u, (\nabla
v_0-\bv_{g})\cdot\bn\rangle_{\pT} -
\sum_{T\in\T_h}\langle\nabla(\Delta u)\cdot\bn,
v_0-v_b\rangle_{\pT},
\end{eqnarray*}
where we have used the fact that $\bv_g\cdot\bn$ and $v_b$ vanishes
on the boundary of the domain. Combining the above equation with
(\ref{key}) leads to
\begin{eqnarray*}
(\Delta_{w} Q_h u, \Delta_w v)_h&=&(f,v_0)+\sum_{T\in\T_h}\langle
\Delta u-\bbQ_h\Delta u,
(\nabla v_0-\bv_{g})\cdot\bn\rangle_{\pT}\\
&-& \sum_{T\in\T_h}\langle\nabla(\Delta u-\bbQ_h\Delta u)\cdot\bn,
v_0-v_b\rangle_{\pT}.
\end{eqnarray*}
Adding $s(Q_hu,\ v)$ to both sides of the above equation gives
\begin{eqnarray}
(\Delta_{w} Q_h u, \Delta_w
v)_h+s(Q_hu,v)&=&(f,v_0)+\sum_{T\in\T_h}\langle \Delta
u-\bbQ_h\Delta u,
(\nabla v_0-\bv_{g})\cdot\bn\rangle_{\pT}\nonumber\\
& -& \sum_{T\in\T_h}\langle\nabla(\Delta u-\bbQ_h\Delta u)\cdot\bn,
v_0-v_b\rangle_{\pT}+s(Q_hu,v).\label{qh}
\end{eqnarray}
Subtracting (\ref{wg}) from (\ref{qh}) yields the following error
equation
\begin{eqnarray}
(\Delta_{w} e_h, \Delta_w v)_h+s(e_h,v)&=&\sum_{T\in\T_h}\langle
\Delta u-\bbQ_h\Delta u,
(\nabla v_0-\bv_{g})\cdot\bn\rangle_{\pT}\nonumber\\
& -& \sum_{T\in\T_h}\langle\nabla(\Delta u-\bbQ_h\Delta u)\cdot\bn,
v_0-v_b\rangle_{\pT}+s(Q_hu,v)\nonumber
\end{eqnarray}
for all $v\in V_h^0$. This completes the derivation of (\ref{ee}).
\end{proof}

The following is an estimate for the error function $e_h$ in the
trip-bar norm which is essentially an $H^2$-norm in $V_h^0$.

\begin{theorem} Let $u_h\in V_h$  be the weak Galerkin finite element solution arising from
(\ref{wg}) with finite element functions of order $k\ge 2$. Assume
that the exact solution of (\ref{pde})-(\ref{bc-n} ) is sufficiently
regular such that $u\in H^{\max\{k+1,4\}}(\Omega)$. Then, there
exists a constant $C$ such that
\begin{equation}\label{err1}
\3bar u_h-Q_hu\3bar \le Ch^{k-1}
\left(\|u\|_{k+1}+h\delta_{k,2}\|u\|_{4}\right).
\end{equation}
In other words, we have the optimal order of convergence in the
$H^2$ norm.
\end{theorem}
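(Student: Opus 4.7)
The plan is to apply the error equation (\ref{ee}) from Lemma \ref{Lemma:error-equation} with the natural choice $v = e_h \in V_h^0$, so that the left-hand side collapses to $\3bar e_h \3bar^2$ by the definition (\ref{3barnorm}). The proof then reduces to bounding each of the three terms on the right-hand side by a product of $C h^{k-1}(\|u\|_{k+1}+h\delta_{k,2}\|u\|_4)$ and $\3bar e_h \3bar$, after which dividing by $\3bar e_h \3bar$ yields (\ref{err1}).

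First I would bound the first boundary term. Writing
$$
\sum_{T\in\T_h}\langle \Delta u-\bbQ_h\Delta u,(\nabla v_0-\bv_g)\cdot\bn\rangle_{\pT}
= \sum_{T\in\T_h}\langle h_T^{1/2}(\Delta u-\bbQ_h\Delta u),\ h_T^{-1/2}(\nabla v_0-\bv_g)\cdot\bn\rangle_{\pT},
$$
I would apply Cauchy--Schwarz to split it into $\bigl(\sum h_T\|\Delta u-\bbQ_h\Delta u\|_{\pT}^2\bigr)^{1/2}$ and $\bigl(\sum h_T^{-1}\|\nabla v_0-\bv_g\|_{\pT}^2\bigr)^{1/2}$. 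The first factor is controlled by (\ref{mmm1}) with $m=k$, giving $Ch^{k-1}\|u\|_{k+1}$, and the second factor is bounded by $s(v,v)^{1/2}\le \3bar v\3bar$. The second boundary term is treated identically, but with weights $h_T^{3/2}$ and $h_T^{-3/2}$; (\ref{mmm2}) yields the factor $Ch^{k-1}(\|u\|_{k+1}+h\delta_{k,2}\|u\|_4)$ and the complementary factor is again dominated by $\3bar v \3bar$ thanks to the $h_T^{-3}$ weight in $s(\cdot,\cdot)$.

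For the stabilizer term $s(Q_h u, v)$, I would expand it according to the definition of $s$, yielding two sums: one weighted by $h_T^{-1}$ featuring $\nabla Q_0 u - \bQ_g(\nabla u)$, the other weighted by $h_T^{-3}$ featuring $Q_0 u - Q_b u$. Cauchy--Schwarz then produces $\3bar v\3bar$ times the respective quantities in (\ref{mmm3}) and (\ref{mmm4}), each of which is bounded by $Ch^{k-1}\|u\|_{k+1}$.

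The only subtlety is the appearance of the $h\delta_{k,2}\|u\|_4$ term, which comes exclusively from (\ref{mmm2}) when $k=2$: in that case $\bbQ_h\Delta u \in P_0(T)$ so $\nabla(\bbQ_h\Delta u)\equiv 0$, and the bound on $h_T^3\|\nabla(\Delta u-\bbQ_h\Delta u)\|_{\pT}^2$ reduces to a trace estimate on $\nabla\Delta u$ itself, which requires the extra regularity $u\in H^4(\Omega)$. Combining all three estimates gives
$$
\3bar e_h\3bar^2 \le C h^{k-1}\bigl(\|u\|_{k+1}+h\delta_{k,2}\|u\|_4\bigr)\,\3bar e_h\3bar,
$$
from which (\ref{err1}) follows. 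No step is genuinely hard; the main bookkeeping issue is simply pairing each boundary integral with the correct power of $h_T$ so that the stabilizer absorbs the $v$-side factors.
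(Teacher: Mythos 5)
Your proposal is correct and follows essentially the same route as the paper: test the error equation with $v=e_h$, split each boundary term by Cauchy--Schwarz with the matching powers of $h_T$, invoke (\ref{mmm1})--(\ref{mmm4}) with $m=k$, and absorb the $v$-side factors into $\3bar e_h\3bar$ before dividing. Your side remark explaining the origin of the $h\delta_{k,2}\|u\|_4$ term is a correct reading of (\ref{mmm2}) and consistent with the paper's treatment.
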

\begin{proof}
By letting $v=e_h$ in the error equation (\ref{ee}), we obtain the
following identity
\begin{eqnarray}\label{March02:00}
\3bar e_h\3bar&=&\sum_{T\in\T_h}\langle \Delta u-\bbQ_h\Delta u,
(\nabla e_0-\be_{g})\cdot\bn\rangle_{\pT}\\
& -& \sum_{T\in\T_h}\langle\nabla(\Delta u-\bbQ_h\Delta u)\cdot\bn,
e_0-e_b\rangle_{\pT}\nonumber\\
&+&\sum_{T\in\T_h} h_T^{-1}\langle \nabla Q_0u-\bQ_{g}\nabla u, \nabla e_0-\be_{g}
\rangle_\pT\nonumber\\
&+& \sum_{T\in\T_h} h_T^{-3}\langle Q_0u-Q_bu,
e_0-e_{b}\rangle_\pT.\nonumber
\end{eqnarray}
Now using the Cauchy-Schwarz inequality and the estimates
(\ref{mmm1}) and (\ref{mmm2}) of Lemma \ref{l2} with $m=k$ one
arrives at
\begin{eqnarray}
&&\left|\sum_{T\in\T_h} \langle \Delta u-\bbQ_h\Delta u, (\nabla
e_0-\be_{g})\cdot\bn\rangle_\pT\right|\label{March02:01}\\
\le &&\left(\sum_{T\in\T_h} h_T\|\Delta u-\bbQ_h\Delta
u\|_\pT^2\right)^{\frac12} \left(\sum_{T\in\T_h} h_T^{-1}
\|\nabla e_0-\be_{g}\|_\pT^2\right)^{\frac12}\nonumber\\
\le && C h^{k-1}\|u\|_{k+1} \3bar e_h\3bar\nonumber
\end{eqnarray}
and
\begin{eqnarray}
&&\left|\sum_{T\in\T_h}\langle \nabla(\Delta u-\bbQ_h\Delta
u)\cdot\bn, e_0-e_b\rangle_\pT\right| \label{March02:02}\\
\le && \left(\sum_{T\in\T_h}h_T^3\|\nabla(\Delta u-\bbQ_h\Delta
u)\|_\pT^2\right)^{\frac12}
\left(\sum_{T\in\T_h}h_T^{-3}\|e_0-e_b\|^2_{\pT}\right)^{\frac12}\nonumber\\
\le && C h^{k-1}(\|u\|_{k+1} + h\delta_{k,2} \|u\|_4) \3bar
e_h\3bar.\nonumber
\end{eqnarray}
Similarly, it follows from the Cauchy-Schwarz and the estimates
(\ref{mmm3}) and (\ref{mmm4}) that
\begin{eqnarray}\label{March02:03}
\left|\sum_{T\in\T_h} h_T^{-1}\langle \nabla Q_0u-\bQ_{g}\nabla u,
\nabla e_0-\be_{g}\rangle_\pT\right|\le C h^{k-1}\|u\|_{k+1} \3bar
e_h\3bar
\end{eqnarray}
and
\begin{eqnarray}\label{March02:04}
\left|\sum_{T\in\T_h} h_T^{-3}\langle Q_0u-Q_bu,\;
e_0-e_b\rangle_\pT\right| \le C h^{k-1}\|u\|_{k+1} \3bar e_h\3bar.
\end{eqnarray}
Substituting (\ref{March02:01})-(\ref{March02:04}) into
(\ref{March02:00}) yields
\[
\3bar e_h\3bar^2 \le
Ch^{k-1}\left(\|u\|_{k+1}+h\delta_{k,2}\|u\|_{4}\right)\3bar
e_h\3bar,
\]
which implies (\ref{err1}). This completes the proof of the theorem.
\end{proof}

\section{An Error Estimate in
$L^2$}\label{Section:error-estimate-in-L2}

This section shall establish an estimate for the first component of
the error function $e_h$ in the standard $L^2$ norm. To this end, we
consider the following dual problem
\begin{eqnarray}
\Delta^2w&=& e_0\quad
\mbox{in}\;\Omega,\label{dual}\\
w&=&0,\quad\mbox{on}\;\partial\Omega,\label{dual1}\\
\nabla w\cdot\bn&=&0\quad\mbox{on}\;\partial\Omega.\label{dual2}
\end{eqnarray}
Assume that the above dual problem has the following regularity
estimate
\begin{equation}\label{reg}
\|w\|_4\le C\|e_0\|.
\end{equation}

\begin{theorem}
Let $u_h\in V_h$ be the weak Galerkin finite element solution
arising from (\ref{wg}) with finite element functions of order $k\ge
2$. Let $t_0=\min(k,3)$. Assume that the exact solution of
(\ref{pde})-(\ref{bc-n} ) is sufficiently regular such that $u\in
H^{k+1}(\Omega)$ and the dual problem (\ref{dual})-(\ref{dual2}) has
the $H^4$ regularity. Then, there exists a constant $C$ such that
\begin{equation}\label{err2}
\|Q_0u-u_0\| \le Ch^{k+t_0-2}(\|u\|_{k+1}+ h\delta_{k,2}\|u\|_4).
\end{equation}
In other words, for quadratic elements (i.e., $k=2$) we have a
sub-optimal order of convergence given by $\|Q_0u-u_0\| =
\mathcal{O}(h^2)$. But for cubic or higher order of elements, we
have the optimal order of convergence $\|Q_0u-u_0\| =
\mathcal{O}(h^{k+1})$.
\end{theorem}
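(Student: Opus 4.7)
The plan is to apply an Aubin--Nitsche duality argument built around the dual problem (\ref{dual})--(\ref{dual2}). Because $w=0$ and $\nabla w\cdot\bn=0$ on $\partial\Omega$, its projection $Q_hw$ lies in $V_h^0$ and is therefore an admissible test function in the error equation (\ref{ee}). The regularity index $m$ available for $w$ in Lemma \ref{l2} saturates at $m=3$ since only $w\in H^4$ is assumed, which is exactly where $t_0=\min(k,3)$ enters.

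The first step is to derive a working representation of $\|e_0\|^2$. I apply the identity (\ref{key}) derived inside the proof of Lemma \ref{Lemma:error-equation} with $u$ replaced by $w$ and $v$ replaced by $e_h$; summing over $T$ converts $\sum_T(\Delta w,\Delta e_0)_T$ into $(\Delta_w Q_hw,\Delta_w e_h)_h$ plus two face integrals involving $\bbQ_h\Delta w$. Separately, elementwise integration by parts gives $\sum_T(\Delta w,\Delta e_0)_T = (e_0,\Delta^2 w)+\sum_T\l\Delta w,\nabla e_0\cdot\bn\r_\pT - \sum_T\l\nabla(\Delta w)\cdot\bn,e_0\r_\pT$. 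Because $\Delta w$ and $\nabla\Delta w$ are single-valued across interior faces while $\be_g\cdot\bn$ and $e_b$ are single-valued on $\E_h^0$ and vanish on $\partial\Omega$ (as $e_h\in V_h^0$), I may subtract $\sum_T\l\Delta w,\be_g\cdot\bn\r_\pT=0$ and $\sum_T\l\nabla(\Delta w)\cdot\bn,e_b\r_\pT=0$, which recasts these face integrals in terms of the jump-type quantities $(\nabla e_0-\be_g)\cdot\bn$ and $e_0-e_b$. Combining the two computations, using $(e_0,\Delta^2 w)=\|e_0\|^2$, and rearranging yields
\[
\|e_0\|^2 = (\Delta_w Q_hw,\Delta_w e_h)_h + \sum_{T\in\T_h}\l\nabla(\Delta w-\bbQ_h\Delta w)\cdot\bn,\,e_0-e_b\r_\pT - \sum_{T\in\T_h}\l(\nabla e_0-\be_g)\cdot\bn,\,\Delta w-\bbQ_h\Delta w\r_\pT.
\]

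Next I substitute $v=Q_hw$ into the error equation (\ref{ee}) to replace $(\Delta_w Q_hw,\Delta_w e_h)_h$ by the three right-hand-side terms of (\ref{ee}) together with $-s(e_h,Q_hw)$. Each of the resulting six terms is bounded by Cauchy--Schwarz combined with Lemma \ref{l2}: for factors associated with $u$ I use $m=k$, while for factors associated with $w$ I use $m=t_0$. The stabilizer contribution is handled through $|s(e_h,Q_hw)|\le \3bar e_h\3bar\cdot s(Q_hw,Q_hw)^{1/2}$, where the first factor is controlled by the $H^2$ estimate (\ref{err1}) of the previous section and the second by Lemma \ref{l2} applied with $m=t_0$. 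Invoking the regularity $\|w\|_4\le C\|e_0\|$ produces $\|e_0\|^2\le Ch^{k+t_0-2}(\|u\|_{k+1}+h\delta_{k,2}\|u\|_4)\|e_0\|$, from which (\ref{err2}) follows upon division by $\|e_0\|$.

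The main technical hurdle is the bookkeeping in the second step: identifying precisely which face integrals cancel by single-valuedness of $\Delta w$, $\nabla\Delta w$, $\be_g\cdot\bn$, and $e_b$, and which survive as approximation-error terms in $\Delta w-\bbQ_h\Delta w$. Once this compact representation of $\|e_0\|^2$ is in place, the remainder is a systematic application of Cauchy--Schwarz and Lemma \ref{l2}. The sub-optimal scaling $h^{k+t_0-2}=h^2$ at $k=2$ is intrinsic to the argument, arising because the index on the $w$-side is forced to $t_0=2$ by the $H^4$ ceiling on the dual solution; for $k\ge 3$ the index saturates at $t_0=3$ and the optimal order $h^{k+1}$ is recovered.
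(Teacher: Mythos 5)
Your proposal is correct and follows essentially the same route as the paper: the duality argument with the dual solution $w$, the reuse of the identity (\ref{key}) with $w$ in place of $u$ together with elementwise integration by parts to obtain the compact representation of $\|e_0\|^2$, the substitution $v=Q_hw$ into the error equation (\ref{ee}), and the six-term estimate via Cauchy--Schwarz and Lemma \ref{l2} with $m=k$ on the $u$-side and $m=t_0$ on the $w$-side. The only cosmetic difference is that you make the cancellation of interior-face integrals and the Cauchy--Schwarz bound on $s(e_h,Q_hw)$ explicit, which the paper leaves implicit.
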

\begin{proof}
Testing (\ref{dual}) with the error function $e_0$ on each element
and then using integration by parts to obtain
\begin{eqnarray*}
\|e_0\|^2&=&(\Delta^2 w, e_0)\\
&=&\sum_{T\in\T_h}(\Delta w,\Delta e_0)_T+\sum_{T\in\T_h}\l \nabla (\Delta w)\cdot\bn, e_0\r_\pT-\sum_{T\in\T_h}\l\Delta w,\nabla e_0\cdot\bn\r_\pT\\
&=&\sum_{T\in\T_h}(\Delta w,\Delta e_0)_T+\sum_{T\in\T_h}\l \nabla
(\Delta w)\cdot\bn, e_0-e_b\r_\pT-\sum_{T\in\T_h}\l\Delta w,(\nabla
e_0-\be_{g})\cdot\bn\r_\pT,
\end{eqnarray*}
where we have used the fact that $e_b$ and $\be_g\cdot\bn$ vanishes
on the boundary of $\Omega$. Using (\ref{key}) with $w$ in the place
of $u$, we can rewrite the above identity as follows
\begin{eqnarray*}
\|e_0\|^2&=&(\Delta_w Q_hw,\Delta_w e_h)_h+\sum_{T\in\T_h}\l (\nabla
(\Delta
w)-\nabla (\bbQ_h\Delta w))\cdot\bn, e_0-e_b\r_\pT\\
& &-\sum_{T\in\T_h}\l\Delta w-\bbQ_h\Delta w,(\nabla
e_0-\be_{g})\cdot\bn\r_\pT.
\end{eqnarray*}
Next, we have from the error equation (\ref{ee}) that
\begin{eqnarray*}
( \Delta_w Q_hw, \Delta_{w} e_h)_h&=&\sum_{T\in\T_h}\langle \Delta
u-\bbQ_h\Delta u,
(\nabla Q_0w-\bQ_{g}\nabla w)\cdot\bn\rangle_{\pT}\\
&-& \sum_{T\in\T_h}\langle\nabla(\Delta u-\bbQ_h\Delta u)\cdot\bn,
Q_0w-Q_bw\rangle_{\pT}\\
&-&s(e_h,Q_hw)+s(Q_hu,Q_hw).
\end{eqnarray*}
Combining the above two equations we obtain
\begin{eqnarray}
\|e_0\|^2&=&\sum_{T\in\T_h}\l (\nabla (\Delta w)-\nabla (\bbQ_h\Delta w))\cdot\bn,
e_0-e_b\r_\pT\label{lll}\\
&-&\sum_{T\in\T_h}(\Delta w-\bbQ_h\Delta w,(\nabla e_0-\be_{g})\cdot\bn\r_\pT-s(e_h,Q_hw)\nonumber\\
&+&\sum_{T\in\T_h}\langle\Delta u-\bbQ_h\Delta u,
(\nabla Q_0w-\bQ_{g}\nabla w)\cdot\bn\rangle_{\pT}\nonumber\\
&-& \sum_{T\in\T_h}\langle\nabla(\Delta u-\bbQ_h\Delta u)\cdot\bn,
Q_0w-Q_bw\rangle_{\pT}+s(Q_hu,Q_hw).\nonumber
\end{eqnarray}
Each of the six terms on the right-hand side of (\ref{lll}) can be
bounded by using the Cauchy-Schwarz inequality and Lemma \ref{l2} as
follows.

For the first term, it follows from the estimate (\ref{mmm2}) and
the fact $t_0=\min(3,k)\le 3$ that
\begin{eqnarray}
&&\left|\sum_{T\in\T_h}\langle \nabla(\Delta w-\bbQ_h\Delta
w)\cdot\bn, e_0-e_b\rangle_\pT\right| \label{March02:102}\\
\le && \left(\sum_{T\in\T_h}h_T^3\|\nabla(\Delta w-\bbQ_h\Delta
w)\|_\pT^2\right)^{\frac12}
\left(\sum_{T\in\T_h}h_T^{-3}\|e_0-e_b\|^2_{\pT}\right)^{\frac12}\nonumber\\
\le && C h^{t_0-1}(\|w\|_{t_0+1} + h\delta_{t_0,2} \|w\|_4) \3bar
e_h\3bar\nonumber\\
\le && C h^{t_0-1}\|w\|_4 \3bar e_h\3bar.\nonumber
\end{eqnarray}

As to the second term, we use the Cauchy-Schwarz inequality and the
estimate (\ref{mmm1}) with $m=t_0$ to obtain
\begin{eqnarray}
&&\left|\sum_{T\in\T_h} \langle \Delta w-\bbQ_h\Delta w, (\nabla
e_0-\be_{g})\cdot\bn\rangle_\pT\right|\label{March02:101}\\
\le &&\left(\sum_{T\in\T_h} h_T\|\Delta w-\bbQ_h\Delta
w\|_\pT^2\right)^{\frac12} \left(\sum_{T\in\T_h} h_T^{-1}
\|\nabla e_0-\be_{g}\|_\pT^2\right)^{\frac12}\nonumber\\
\le && C h^{t_0-1}\|w\|_{t_0+1} \3bar e_h\3bar\nonumber\\
\le && C h^{t_0-1}\|w\|_{4} \3bar e_h\3bar.\nonumber
\end{eqnarray}
Analogously, for the third term, we have
\begin{equation}\label{March02:181}
|s(e_h,Q_hw)|\le Ch^{t_0-1}\|w\|_{4}\3bar e_h\3bar.
\end{equation}

The fourth term can be bounded by using Lemma \ref{l1} as follows
\begin{eqnarray}\label{March02:158}
&&\left|\sum_{T\in\T_h}\langle\Delta u-\bbQ_h\Delta u,
(\nabla Q_0w-\bQ_{g}\nabla w)\cdot\bn\rangle_{\pT}\right|\\
\le &&\left(\sum_{T\in\T_h}h\|\Delta u-\bbQ_h\Delta
u\|_\pT^2\right)^{\frac12}
\left(\sum_{T\in\T_h}h^{-1}\|\nabla Q_0w-\nabla w\|^2_{\pT}\right)^{\frac12}\nonumber\\
\le &&Ch^{k-1}\|u\|_{k+1} h^{t_0-1}\|w\|_{t_0+1}\nonumber\\
\le &&Ch^{k+t_0-2}\|u\|_{k+1} \|w\|_{4}.\nonumber
\end{eqnarray}

As to the fifth term, we again use the Cauchy-Schwarz inequality and
Lemma \ref{l2} to obtain
\begin{eqnarray}\label{March02:159}
&&\left|\sum_{T\in\T_h}\langle\nabla(\Delta u-\bbQ_h\Delta
u)\cdot\bn, Q_0w-Q_bw\rangle_{\pT}\right|\\
&&\le \left(\sum_{T\in\T_h}h_T^3\|\nabla(\Delta u-\bbQ_h\Delta
u)\|_\pT^2
\right)^{\frac12}\left(\sum_{T\in\T_h}h_T^{-3}\|Q_0w-w \|^2_{\pT}\right)^{\frac12}\nonumber\\
&&\le Ch^{k-1}(\|u\|_{k+1}+h\delta_{k,2}\|u\|_4)
h^{t_0-1}\|w\|_{t_0+1}\nonumber\\
&&\le Ch^{k+t_0-2}(\|u\|_{k+1}+h\delta_{k,2}\|u\|_4)
\|w\|_{4}.\nonumber
\end{eqnarray}

The last term on the right-hand side of (\ref{lll}) can be estimated
as follows.
\begin{eqnarray}\label{March02:161}
&&|s(Q_hu, Q_hw)|\\
&\le&\left|\sum_{T\in\T_h} h_T^{-1}\langle \nabla
Q_0u-\bQ_{g}\nabla u, \nabla Q_0w -\bQ_{g} \nabla w\rangle_\pT\right|\nonumber \\
&+& \left|\sum_{T\in\T_h} h_T^{-3}\langle Q_0u-Q_{b}u, Q_0w-Q_{b}w\rangle_\pT\right|\nonumber\\
&\le&\left(\sum_{T\in\T_h}h_T^{-1}\|\nabla Q_0u-\nabla
u\|_\pT^2\right)^{\frac12}
\left(\sum_{T\in\T_h}h_T^{-1}\|\nabla Q_0w-\nabla w\|^2_{\pT}\right)^{\frac12}\nonumber\\
&+&\left(\sum_{T\in\T_h}h_T^{-3}\|Q_0u-u\|_\pT^2\right)^{\frac12}
\left(\sum_{T\in\T_h}h_T^{-3}\|Q_0w-w\|^2_{\pT}\right)^{\frac12}\nonumber\\
&\le&Ch^{k-1}\|u\|_{k+1} \ h^{t_0-1}\|w\|_{t_0+1}\nonumber\\
&\le&Ch^{k+t_0-2}\|u\|_{k+1} \|w\|_{4}.\nonumber
\end{eqnarray}
Substituting the estimates (\ref{March02:102})-(\ref{March02:161})
into (\ref{lll}) yields
$$
\|e_0\|^2\le Ch^{t_0-1} \3bar e_h\3bar \|w\|_4 +
Ch^{k+t_0-2}(\|u\|_{k+1}+ h\delta_{k,2}\|u\|_4)\|w\|_4.
$$
Using the regularity estimate (\ref{reg}) we arrive at
$$
\|e_0\|\le Ch^{t_0-1} \3bar e_h\3bar + Ch^{k+t_0-2}(\|u\|_{k+1}+
h\delta_{k,2}\|u\|_4).
$$
Finally, by combining the above estimate with the $H^2$ error
estimate (\ref{err1}) we obtain the desired $L^2$ error estimate
(\ref{err2}).
\end{proof}

\section{Numerical Results}\label{Section:numerical-results}

Our numerical experiments are conducted for the weak Galerkin finite
element scheme (\ref{wg-2}) by using the following finite element
space
$$
\tilde V_h=\{v=\{v_0,v_b,v_{g}\bn_e\}, \ v_0\in P_2(T),\ v_b\in
P_2(e),\ v_{g}\in P_1(e), T\in\mathcal{T}_h, e\in\mathcal{E}_h\}.
$$
For any given $v=\{v_0,v_b,v_{g}\bn_e\}\in \tilde V_h,$ the discrete
weak Laplacian, $\Delta_w v$, is computed locally on each element
$T$ as a function in $P_0(T)$ by solving the following equation
$$
(\Delta_w v,\psi)_T=(v_0,\Delta\psi)_T+\langle v_{g}{\bf
n}_e\cdot{\bf n}, \psi\rangle_{\partial T}-\langle
v_b,\nabla\psi\cdot{\bf n}\rangle_{\partial T},
$$
for all $\psi\in P_0(T)$. Since $\psi\in P_0(T)$, the above equation
can be simplified as
\begin{equation}
(\Delta_w v,\psi)_T=\langle v_{g}{\bf n}_e\cdot{\bf
n},\psi\rangle_{\partial T}.
\end{equation}

The error for the WG-FEM solution is measured in two norms defined
as follows:
\begin{eqnarray}
\3bar v_h \3bar^2:&=&\sum_{T\in\mathcal{T}_h}\bigg(\int_T|\Delta_w v_h|^2dx+
h_T^{-1}\int_{\partial T}|(\nabla v_0)\cdot\bn_e-v_{g}|^2ds \\
&&+h_T^{-3}\int_{\partial T}(v_0-v_b)^2ds\bigg),\qquad\qquad\  (\mbox{A discrete $H^2$-norm}),\notag\\
\|v_h\|^2:&=&\sum_{T\in\mathcal{T}_h}\int_T|v_0|^2dx,\qquad\qquad\qquad\qquad  (\mbox{Element-based $L^2$-norm}).
\end{eqnarray}

\subsection{Test Case 1}

Here we consider the forth order problem that seeks an unknown
function $u=u(x,y)$ satisfying
$$
\Delta^2 u=f
$$
in the square domain $\Omega=(0,1)^2$ with homogeneous Dirichlet and
Neumann boundary conditions. The exact solution is given by
$u=x^2(1-x)^2y^2(1-y)^2$, and the function $f=f(x,y)$ is computed to
match the exact solution. Uniform triangular meshes are used and
they are constructed as follows: (1) partition the domain into
$n\times n$ sub-rectangles; (2) divide each square element into two
triangles by the diagonal line with a negative slope. The mesh size
is denoted by $h=1/n.$ Table \ref{ex1_tri} shows that the
convergence rates for the WG-FEM solution in the $H^2$ and $L^2$
norms are of order $O(h)$ and $O(h^2)$, respectively.

\begin{table}[h]
\caption{Test Case 1: Numerical error and convergence rates in $H^2$
and $L^2$.}\label{ex1_tri} \center
\begin{tabular}{|c||c|c|c|c|}
\hline
$h$ & $\3bar u_h-Q_h u\3bar$ & order & $\|u_0-Q_0 u\|$ & order \\
\hline\hline
   2.5000e-01  &2.5683e-01 &           &3.3304e-02 &       \\ \hline
   1.2500e-01  &1.3540e-01 &9.2359e-01 &9.1046e-03 &1.8710 \\ \hline
   6.2500e-02  &7.2378e-02 &9.0360e-01 &2.6049e-03 &1.8054 \\ \hline
   3.1250e-02  &3.8275e-02 &9.1915e-01 &7.3257e-04 &1.8302 \\ \hline
   1.5625e-02  &1.9687e-02 &9.5916e-01 &1.9461e-04 &1.9124 \\ \hline
   7.8125e-03  &9.9457e-03 &9.8510e-01 &4.9762e-05 &1.9675 \\ \hline
\end{tabular}
\end{table}

\subsection{Test Case 2}
Here we solve the biharmonic equation on the domain of unit square
with nonhomogeneous Dirichlet and Neumann boundary conditions. The
exact solution is given by $u=\sin(\pi x)\sin(\pi y)$, and the
function $f=f(x,y)$ is computed accordingly.

The mesh triangulations are constructed in the same way as in the
test case 1. The numerical results are presented in Table
\ref{ex2_tri} which confirm the theory developed in earlier
sections.
\begin{table}[h]
\caption{Test Case 2: Numerical error and convergence rates for the
biharmonic equation with non-homogeneous boundary
conditions.}\label{ex2_tri} \center
\begin{tabular}{|c||c|c|c|c|}
\hline
$h$ & $\3bar u_h-Q_h u\3bar$ & order & $\|u_0-Q_0 u\|$ & order \\
\hline\hline
   2.5000e-01   &2.4536e+01 &            &3.1862e+00 &        \\ \hline
   1.2500e-01   &1.2794e+01 &9.3943e-01  &8.5298e-01 &1.9013  \\ \hline
   6.2500e-02   &6.7243e+00 &9.2801e-01  &2.3439e-01 &1.8636  \\ \hline
   3.1250e-02   &3.4811e+00 &9.4984e-01  &6.2578e-02 &1.9052  \\ \hline
   1.5625e-02   &1.7657e+00 &9.7930e-01  &1.6066e-02 &1.9616  \\ \hline
   7.8125e-03   &8.8709e-01 &9.9309e-01  &4.0534e-03 &1.9868  \\ \hline
\end{tabular}
\end{table}

More numerical experiments should be conducted for the WG-FEM
schemes (\ref{wg}) and (\ref{wg-2}). In particular, it would be
interesting to see results for WG approximations of high order
elements. It is also important to hybridize (\ref{wg}) and
(\ref{wg-2}) so that unknowns related to $v_0$ can be eliminated
locally on each element.

\appendix

\section*{Appendix}\label{Section:appendix}
\setcounter{section}{1} The goal of this Appendix is to establish
some fundamental estimates useful in the error estimate for general
weak Galerkin finite element methods. First, we derive a trace
inequality for functions defined on the finite element partition
$\T_h$ with properties as specified in \cite{wy-mixed}. For
completeness, we review the shape regularity assumption in the
following subsection.

\subsection{Domain Partition and Shape Regularity}
Let ${\cal T}_h$ be a partition of the domain $\Omega$ consisting of
polygons in two dimensions or polyhedra in three dimensions
satisfying a set of conditions to be specified. Denote by ${\cal
E}_h$ the set of all edges or flat faces in ${\cal T}_h$, and let
${\cal E}_h^0={\cal E}_h\backslash\partial\Omega$ be the set of all
interior edges or flat faces. For every element $T\in \T_h$, we
denote by $|T|$ the area or volume of $T$ and by $h_T$ its diameter.
Similarly, we denote by $|e|$ the length or area of $e$ and by $h_e$
the diameter of edge or flat face $e\in\E_h$. We also set as usual
the mesh size of $\T_h$ by
$$
h=\max_{T\in\T_h} h_T.
$$

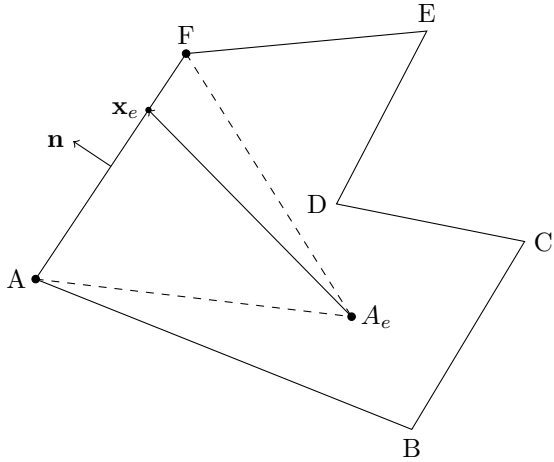
\begin{figure}[h!]
\begin{center}
\begin{tikzpicture}
\coordinate (A) at (-3,0); \coordinate (B) at (2,-2); \coordinate
(C) at (3.5, 0.5); \coordinate (D) at (1,1); \coordinate (E) at
(2.2, 3.3); \coordinate (F) at (-1, 3); \coordinate (CC) at (0,0);
\coordinate (Ae) at (1.2,-0.5); \coordinate (xe) at (-1.5, 2.25);
\coordinate (AFc) at (-2, 1.5); \coordinate (AFcLeft) at (-2.5,
1.83333); \draw node[left] at (xe) {$\bx_e$}; \draw node[right] at
(Ae) {$A_e$}; \draw node[left] at (A) {A}; \draw node[below] at (B)
{B}; \draw node[right] at (C) {C}; \draw node[left] at (D) {D};
\draw node[above] at (E) {E}; \draw node[above] at (F) {F}; \draw
node[left] at (AFcLeft) {$\bn$}; \draw
(A)--(B)--(C)--(D)--(E)--(F)--cycle; \draw[dashed](A)--(Ae);
\draw[dashed](F)--(Ae); \draw[->] (Ae)--(xe); \draw[->]
(AFc)--(AFcLeft); \filldraw[black] (A) circle(0.05);
    \filldraw[black] (F) circle(0.05);
    \filldraw[black] (Ae) circle(0.05);
    \filldraw[black] (xe) circle(0.035);
\end{tikzpicture}
\caption{Depiction of a shape-regular polygonal element $ABCDEFA$.}
\label{fig:shape-regular-element}
\end{center}
\end{figure}

All the elements of $\T_h$ are assumed to be closed and simply
connected polygons or polyhedra, see Fig.
\ref{fig:shape-regular-element}. The partition $\T_h$ is said to be
shape regular if the following properties are satisfied.

\medskip
\begin{description}
\item[A1:] \ Assume that there exist two positive constants $\varrho_v$ and $\varrho_e$
such that for every element $T\in\T_h$ we have
\begin{equation}\label{a1}
\varrho_v h_T^d\leq |T|,\qquad \varrho_e h_e^{d-1}\leq |e|
\end{equation}
for all edges or flat faces of $T$.

\item[A2:] \ Assume that there exists a positive constant $\kappa$ such that for every element
$T\in\T_h$ we have
\begin{equation}\label{a2}
\kappa h_T\leq h_e
\end{equation}
for all edges or flat faces $e$ of $T$.

\item[A3:] \ Assume that the mesh edges or faces are flat. We
further assume that for every $T\in\T_h$, and for every edge/face
$e\in \partial T$, there exists a pyramid $P(e,T, A_e)$ contained in
$T$ such that its base is identical with $e$, its apex is $A_e\in
T$, and its height is proportional to $h_T$ with a proportionality
constant $\sigma_e$ bounded from below by a fixed positive number
$\sigma^*$. In other words, the height of the pyramid is given by
$\sigma_e h_T$ such that $\sigma_e\ge \sigma^*>0$. The pyramid is
also assumed to stand up above the base $e$ in the sense that the
angle between the vector $\bx_e-A_e$, for any $x_e\in e$, and the
outward normal direction of $e$ is strictly acute by falling into an
interval $[0, \theta_0]$ with $\theta_0< \frac{\pi}{2}$.

\item[A4:] \ Assume that each $T\in\T_h$ has a circumscribed simplex $S(T)$ that is
shape regular and has a diameter $h_{S(T)}$ proportional to the
diameter of $T$; i.e., $h_{S(T)}\leq \gamma_* h_T$ with a constant
$\gamma_*$ independent of $T$. Furthermore, assume that each
circumscribed simplex $S(T)$ intersects with only a fixed and small
number of such simplices for all other elements $T\in\T_h$.
\end{description}

\subsection{A Trace Inequality}

The following is trace inequality concerning functions in the
Sobolev space $W^{1,p}$.

\begin{lemma}[Trace Inequality]\label{Lemma:trace-inequality}
Let $\T_h$ be a partition of the domain $\Omega$ into
polygons in 2D or polyhedra in 3D. Assume that the partition $\T_h$
satisfies the assumptions (A1), (A2), and (A3) as specified in
Section \cite{wy-mixed}. Let $p>1$ be any real number. Then, there
exists a constant $C$ such that for any $T\in\T_h$ and edge/face
$e\in\partial T$, we have
\begin{equation}\label{App:trace}
\|\theta\|_{L^p(e)}^p \leq C h_T^{-1}\left(\|\theta\|_{L^p(T)}^p +
h_T^{p} \|\nabla \theta\|_{L^p(T)}^p\right),
\end{equation}
where $\theta\in W^{1,p}(T)$ is any function.
\end{lemma}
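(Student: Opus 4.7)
The plan is to reduce the trace inequality to a divergence-theorem identity over the pyramid $P = P(e,T,A_e)$ guaranteed by assumption (A3), and then absorb the resulting mixed term using Young's inequality.

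First I would construct an auxiliary vector field on $P$ by setting
\begin{equation*}
\psi(x) := \frac{x-A_e}{\sigma_e h_T}.
\end{equation*}
The pyramid structure of (A3) is tailor-made for this choice: on the base $e$, the quantity $(x-A_e)\cdot \bn_e$ equals the height $\sigma_e h_T$ (positive by the acute-angle clause of (A3)), so $\psi\cdot\bn_e = 1$ uniformly on $e$; on each lateral face of $P$ the vector $x-A_e$ is tangent to that face, hence $\psi\cdot\bn = 0$ there. Moreover, since $|x-A_e|\le 2h_T$ and $\sigma_e\ge \sigma^{*}$, one has $|\psi|\le C$ and $\nabla\cdot\psi = d/(\sigma_e h_T)\le C h_T^{-1}$, with $C$ depending only on the shape-regularity constants.

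Next I would apply the divergence theorem to $|\theta|^p\psi$ on $P$ (the $W^{1,1}$ regularity needed is supplied by $\theta\in W^{1,p}(T)$ through the chain rule $\nabla |\theta|^p = p|\theta|^{p-2}\theta\,\nabla\theta$). The boundary term collapses to $\int_e|\theta|^p\,ds$ by the sign properties of $\psi\cdot\bn$ on $\partial P$, yielding
\begin{equation*}
\int_e|\theta|^p\,ds = \int_P\bigl(p|\theta|^{p-2}\theta\,\nabla\theta\cdot\psi + |\theta|^p\,\nabla\cdot\psi\bigr)\,dx,
\end{equation*}
which is bounded by $C\int_P\bigl(|\theta|^{p-1}|\nabla\theta| + h_T^{-1}|\theta|^p\bigr)\,dx$. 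The mixed product is then split by Young's inequality with conjugate exponents $p$ and $p/(p-1)$ applied to the pair $\bigl(h_T^{(p-1)/p}|\nabla\theta|,\;h_T^{-(p-1)/p}|\theta|^{p-1}\bigr)$, giving $|\theta|^{p-1}|\nabla\theta|\le C\bigl(h_T^{p-1}|\nabla\theta|^p + h_T^{-1}|\theta|^p\bigr)$. Combining terms and enlarging $P\subset T$ yields exactly (\ref{App:trace}).

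The principal obstacle is that $T$ is an arbitrary polygon/polyhedron, so neither a single reference element nor the standard star-shaped-domain trace theorem applies with a uniform constant across $\T_h$. Assumption (A3) removes this obstacle by supplying, element by element, a pyramid with controlled height and opening angle, which is exactly what is needed to make $\psi$ of size $\mathcal{O}(1)$ and divergence $\mathcal{O}(h_T^{-1})$ with constants depending only on $\sigma^{*}$ and $\theta_0$. Once $\psi$ is available, the remainder of the proof is a routine divergence-plus-Young computation.
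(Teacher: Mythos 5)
Your proof is correct, but it takes a genuinely different route from the paper's. The paper does not use the divergence theorem at all: it parametrizes the face $e$, writes each point of the pyramid as $\bx(t,\xi,\eta)=(1-t)\phi(\xi,\eta)+t\bx_*$, applies the fundamental theorem of calculus to $|\theta|^p$ along the segment from a base point toward the apex, uses Young's inequality with the same choice $\varepsilon=h_T^{p-1}$ that you make implicitly, and then converts the iterated integrals over $[0,\tfrac12]\times D$ into volume integrals over the truncated prismatoid $P_{1/2}$ by bounding the Jacobian $J=(1-\tau)^2|(\phi_\xi\times\phi_\eta)\cdot\omega|$ from below; this Jacobian bound is where the acute-angle clause of (A3) is consumed. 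Your argument replaces all of that with a single Gauss--Green identity for the field $\psi=(x-A_e)/(\sigma_e h_T)$, exploiting that $\psi\cdot\bn=1$ on $e$ and $\psi\cdot\bn=0$ on the lateral faces. What your version buys is brevity and a slightly weaker use of (A3): you only need the pyramid to lie in $T$ with height $\ge\sigma^*h_T$, not the angle bound $\theta_0<\pi/2$ per se. What the paper's version buys is that the ray-integration technique is reused verbatim to prove the companion domain inverse inequality (Lemma \ref{appendix-lemma2}), and it sidesteps the two technical points you should still record in your write-up: that the pyramid is a Lipschitz domain with constants controlled by (A1)--(A3) so that the Gauss--Green formula applies to $W^{1,1}$ fields, and that the chain rule $\nabla|\theta|^p=p|\theta|^{p-2}\theta\,\nabla\theta$ together with H\"older's inequality places $|\theta|^p\psi$ in $W^{1,1}(P)$. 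Neither point is a gap, but both deserve a citation.
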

\begin{proof}
We shall provide a proof for the case of $e$ being a flat face and
$\theta\in C^1(T)$. To this end, let the flat face $e$ be
represented by the following parametric equation:
\begin{eqnarray}\label{parametric-e}
\bx_e=\phi(\xi,\eta):=(\phi_1(\xi, \eta),
\phi_2(\xi,\eta),\phi_3(\xi,\eta))
\end{eqnarray}
for $(\xi,\eta)\in D\subset \mathbb{R}^2$. By Assumption {\bf A3},
there exists a pyramid $P(e,T,A_e)$ with apex
$A_e=\bx_*:=(x_1^*,x_2^*,x_3^*)$ contained in the element $T$. This
pyramid has the following parametric representation:
\begin{eqnarray}\label{pyramid}
\bx(t,\xi,\eta)&=&(1-t)\phi(\xi, \eta)+t\bx_*
\end{eqnarray}
for $(t, \xi,\eta)\in [0,1]\times D$. For any given $\bx_e\in e$,
the line segment joining $\bx_e$ and the apex $\bx_*$ can be
represented by
$$
\bx(t) = \bx_e + t(\bx_*-\bx_e).
$$
From the fundamental theorem of Calculus,  we have
$$
|\theta|^p(\bx_e)-|\theta|^p(\bx(t))=-\int_0^{t}\partial_\tau
(|\theta|^p(\bx_e+\tau\omega))d\tau, \qquad\omega = {\bx_*-\bx_e}.
$$
The above can be further rewritten as
$$
|\theta|^p(\bx_e)-|\theta|^p(\bx(t))=-p\int_0^{t}|\theta|^{p-1}\sgn(\theta) \left(
\nabla\theta\cdot \omega\right) d\tau.
$$
Let $q=\frac{p}{p-1}$. It follows from the Cauchy-Schwarz inequality that for any $t\in
[0,\frac12]$ we have
$$
|\theta|^p(\bx_e) \le |\theta|^p(\bx(t)) + p
\left(\int_0^{\frac12}|\theta|^p |\omega| d\tau \right)^{\frac{1}{q}} \left(\int_0^{\frac12}
|\nabla\theta|^{p}|\omega| \right)^{\frac{1}{p}}.
$$
Using the Young's inequality $ab\le \varepsilon a^p+\varepsilon^{-\frac{q}{p}}{b^q}$ for any non-negative real numbers $a, \ b$, and $\varepsilon>0$, we arrive at
$$
|\theta|^p(\bx_e) \le |\theta|^p(\bx(t)) + p\ \varepsilon^{-\frac{q}{p}}
\int_0^{\frac12}|\theta|^p |\omega| d\tau + p\ \varepsilon \int_0^{\frac12}
|\nabla\theta|^{p}|\omega|d\tau.
$$
Now we integrate the above inequality over the flat face $e$ by
using the parametric equation (\ref{parametric-e}), yielding
\begin{eqnarray}
\int_D|\theta|^p(\bx_e)|\phi_\xi\times\phi_\eta| d\xi d\eta &\le&
\int_D|\theta|^p(\bx(t))|\phi_\xi\times\phi_\eta| d\xi d\eta \nonumber\\
& & +p\ \varepsilon^{-\frac{q}{p}} \int_0^{\frac12}\int_D|\theta|^p |\omega|
|\phi_\xi\times\phi_\eta|
d\xi d\eta d\tau \label{aaa.01}\\
& & +p\ \varepsilon \int_0^{\frac12}\int_D|\nabla\theta|^p |\omega|
|\phi_\xi\times\phi_\eta| d\xi d\eta d\tau.\label{aaa.02}
\end{eqnarray}
Observe that the integral of a function over the following
prismatoid
$$
P_{\frac12}:=\left\{\bx(t,\xi,\eta):\quad (t,\xi,\eta)\in
[0,1/2]\times D \right\}
$$
is given by
$$
\int_{P_{\frac12}} f(\bx) d\bx= \int_{0}^{\frac12}\int_D
f(\bx(\tau,\xi,\eta)) J(\tau,\xi,\eta) d\xi d\eta d\tau,
$$
where $J(\tau,\xi,\eta)=(1-\tau)^2 |(\phi_\xi\times \phi_\eta)\cdot
\omega|$ is the Jacobian from the coordinate change. The vector
$\phi_\xi\times \phi_\eta$ is normal to the face $e$, and
$\omega=\bx_*-\bx_e$ is a vector from the base point $\bx_e$ to the
apex $\bx_*$. The angle assumption (see Assumption {\bf A3} of
Section \ref{wg-mfem}) for the prism $P(e,T,A_e)$
indicates that the Jacobian satisfies the following relation
\begin{equation}\label{jacobian-estimate}
J(\tau,\xi,\eta)\ge \frac{\mu_0}{4} |\phi_\xi\times \phi_\eta|\
|\omega|,\quad \tau\in [0,1/2]
\end{equation}
for some fixed $\mu_0\in (0,1)$. Observe that the
left-hand side of (\ref{aaa.01}) is the surface integral of
$|\theta|^p$ over the face $e$. Thus, substituting
(\ref{jacobian-estimate}) into (\ref{aaa.01}) and (\ref{aaa.02})
yields
\begin{eqnarray*}
\int_e|\theta|^p de &\le&
\int_D|\theta|^p(\bx(t))|\phi_\xi\times\phi_\eta| d\xi d\eta \nonumber\\
& & +4p\mu_0^{-1}\varepsilon^{-\frac{q}{p}}\int_{P_{\frac12}} |\theta|^p d\bx
+4p\mu_0^{-1}\varepsilon \int_{P_{\frac12}}|\nabla\theta|^p d\bx.
\end{eqnarray*}
Now we integrate the above with respect to $t$ in the interval
$[0,\frac12]$ to obtain
\begin{eqnarray}
\frac12 \int_e|\theta|^p de &\le&
\int_0^{\frac12}\int_D|\theta|^p(\bx(t))|\phi_\xi\times\phi_\eta| d\xi d\eta dt\label{aaa.03}\\
& & +2p\mu_0^{-1}\varepsilon^{-\frac{q}{p}}\int_{P_{\frac12}} |\theta|^p d\bx
+2p\mu_0^{-1}\varepsilon\int_{P_{\frac12}}|\nabla\theta|^p d\bx.\nonumber
\end{eqnarray}
Again, by substituting (\ref{jacobian-estimate}) into the right-hand
side of (\ref{aaa.03}) we arrive at
\begin{eqnarray}
\frac12 \int_e|\theta|^p de &\le& 4\mu_0^{-1} |\omega|^{-1}
\int_0^{\frac12}\int_D|\theta|^p(\bx(t))J(t,\xi,\eta) d\xi d\eta dt
\label{aaa.04}\\
& & +2p\mu_0^{-1}\varepsilon^{-\frac{q}{p}}\int_{P_{\frac12}} |\theta|^p d\bx
+2p\mu_0^{-1}\varepsilon \int_{P_{\frac12}}|\nabla\theta|^p d\bx.\nonumber
\end{eqnarray}
The first integral on the right-hand side of (\ref{aaa.04}) is the
integral of $|\theta|^p$ on the prismatoid $P_{\frac12}$. It can be
seen from the Assumption {\bf A3} that
\begin{equation}\label{yes.888}
|\omega|^{-1} \leq \alpha_* h_T^{-1}
\end{equation}
for some positive constant $\alpha_*$. By taking $\varepsilon= h_T^{p-1}$, we have
$\varepsilon^{-\frac{q}{p}}=h_T^{-1}$. It then follows from (\ref{aaa.04}) and (\ref{yes.888}) that
$$
\int_e |\theta|^p de \leq C h_T^{-1} \left(\int_{P_{\frac12}}
|\theta|^p d\bx + h_T^{p}\int_{P_{\frac12}} |\nabla\theta|^p
d\bx\right),
$$
which completes the proof of the Lemma.
\end{proof}
\medskip

\subsection{A Domain Inverse Inequality}

Next, we would like to establish an estimate for the $L^p$ norm of
polynomial functions by their $L^p$ norm on a subdomain. To this
end, we first derive a result of similar nature for general
functions in $W^{1,p}$.

\begin{lemma}\label{appendix-lemma2}
Let $K\subset\mathbb{R}^d$ be convex and $v\in W^{1,p}(K)$ with
$p\ge 1$. Then,
\begin{equation}\label{aaa.08}
\|v\|^p_{L^p(K)} \leq \frac{2|K|}{|S|} \|v\|_{L^p(S)}^p
+\left(\frac{2p\omega_d\delta^{d+1}}{|S|} \right)^p\|\nabla
v\|_{L^p(K)}^p,
\end{equation}
where $\delta$ is the diameter of $K$, $S$ is any measurable subset
of $K$, and $\omega_d=\frac{2\pi^{d/2}}{d\Gamma(d/2)}$ is the volume
of unit ball in $\mathbb{R}^d$.
\end{lemma}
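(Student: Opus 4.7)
My plan is a standard Poincar\'e-type argument. Convexity of $K$ ensures that for every $x\in K$ and $y\in S$, the segment $[y,x]$ lies in $K$, so the fundamental theorem of calculus gives
\[
v(x)=v(y)+\int_0^1\nabla v(y+t(x-y))\cdot(x-y)\,dt.
\]
Averaging this identity in $y\in S$ against the normalized measure $|S|^{-1}dy$, taking absolute values, then applying the elementary inequality $(a+b)^p\le 2^{p-1}(a^p+b^p)$ together with Jensen's inequality (convexity of $t\mapsto t^p$), one obtains
\[
|v(x)|^p \le \frac{2^{p-1}}{|S|}\int_S|v(y)|^p\,dy + \frac{2^{p-1}}{|S|^p}\Bigl(\int_S|v(x)-v(y)|\,dy\Bigr)^p.
\]
Integrating in $x\in K$ immediately produces the first term of the stated bound, up to the absolute constant, which I address at the end.

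The harder step is to reduce the gradient piece to something comparable with $\|\nabla v\|_{L^p(K)}^p$. I would use polar coordinates centered at $x$: writing $y=x+r\omega$ with $|\omega|=1$ and $r=|y-x|\le\delta$, the one-dimensional fundamental theorem on the segment from $x$ to $y$ gives $|v(x)-v(y)|\le\int_0^r|\nabla v(x+u\omega)|\,du$. Extending $|\nabla v|$ by zero outside $K$ and exchanging the order of integration in $y$ via $dy=r^{d-1}dr\,d\omega$, together with the bound $\int_0^\delta r^{d-1}\,dr=\delta^d/d$ and the Jacobian $u^{d-1}$ for returning to Cartesian coordinates, yields the Riesz-potential estimate
\[
\int_S|v(x)-v(y)|\,dy\le\frac{\delta^d}{d}\int_K\frac{|\nabla v(z)|}{|z-x|^{d-1}}\,dz.
\]
I would then apply H\"older's inequality with the kernel split $|z-x|^{-(d-1)}=|z-x|^{-(d-1)/p}\cdot|z-x|^{-(d-1)/q}$, $q=p/(p-1)$, using the explicit estimate $\int_K|z-x|^{-(d-1)}\,dz\le\int_{B_\delta(x)}|z-x|^{-(d-1)}\,dz=d\omega_d\,\delta$, which relies only on $K\subset B_\delta(x)$. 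Raising to the $p$-th power, integrating in $x\in K$, and applying Fubini together with this same spherical-integral estimate once more produces a term of order $(\omega_d\delta^{d+1}/|S|)^p\|\nabla v\|_{L^p(K)}^p$.

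The main obstacle is bookkeeping the constants so as to land precisely on $2$ in the first term and $2p\omega_d\delta^{d+1}/|S|$ in the second. The splitting $\delta^{d+1}=\delta^d\cdot\delta$ arises cleanly from one factor of $\int_0^\delta r^{d-1}\,dr$ and one of $\int_{B_\delta}|z|^{-(d-1)}\,dz=d\omega_d\delta$; the $p$ in front emerges from the H\"older/Young step with optimal choice of exponent; and refining the crude factor $2^{p-1}$ in the first term down to the stated $2$ will likely require a sharper triangle-type bound applied directly inside the $L^p(S,|S|^{-1}dy)$ norm, rather than the elementary $(a+b)^p$ inequality. I would polish that constant only after the main structure is secured.
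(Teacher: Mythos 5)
Your route is sound in its core, and at the decisive step it coincides with the paper's: the reduction of the averaged increment $\int_S|v(\bx)-v(\by)|\,d\by$ to the Riesz potential $\frac{\delta^d}{d}\int_K|\nabla v(z)|\,|z-\bx|^{1-d}\,dz$ via polar coordinates and zero-extension is exactly the computation in (\ref{aaa.17})--(\ref{aaa.18}), and your H\"older/Fubini treatment of that potential using $\int_K|z-\bx|^{1-d}\,dz\le d\omega_d\delta$ correctly yields the $(\omega_d\delta^{d+1}/|S|)^p$ scaling of the gradient term, with a prefactor $2^{p-1}\le(2p)^p$, so that term is fine. Where the two arguments genuinely diverge is the zeroth-order piece, and there your proposal has a real gap: splitting $v(\bx)=v(\by)+(\text{increment})$ and applying $(a+b)^p\le 2^{p-1}(a^p+b^p)$ leaves $\frac{2^{p-1}|K|}{|S|}\|v\|^p_{L^p(S)}$, which for $p>2$ is strictly weaker than the stated $\frac{2|K|}{|S|}\|v\|^p_{L^p(S)}$ and does not imply it. Your proposed repair (a weighted triangle inequality in $L^p(S)$) trades $2^{p-1}$ for $(1+\epsilon)^{p-1}$ on the first term at the cost of $(1+1/\epsilon)^{p-1}$ on the second; the numbers can likely be made to close, but you have not carried this out, and it is the one missing ingredient.

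The paper sidesteps the issue with a different opening move: it applies the fundamental theorem of calculus to $|v|^p$ itself along the segment from $\bx$ to $\by$, so $|v|^p(\bx)\le|v|^p(\by)+p\int|v|^{p-1}|\partial_r v|\,dr$ and no $p$-th power of a sum is ever taken. Young's inequality with a free parameter $\varepsilon$ splits the line integral into $\varepsilon^{-q/p}\int|v|^p+\varepsilon\int|\partial_r v|^p$; after averaging over $\by\in S$, applying the same Riesz-potential bound to both pieces, and integrating in $\bx$, the $|v|^p$ contribution reappears on the right as $p\omega_d\delta^{d+1}\varepsilon^{-q/p}\int_K|v|^p$ and is \emph{absorbed} into the left-hand side by choosing $\varepsilon=\bigl(2p\omega_d\delta^{d+1}/|S|\bigr)^{p-1}$. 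That absorption is precisely what produces the factor $2$ and the exact constant $\bigl(2p\omega_d\delta^{d+1}/|S|\bigr)^p$. If the stated constants matter, adopt that device; if only the structure of the estimate is needed (as in the paper's subsequent use of the lemma, where the constants are generic), your argument suffices once the first constant is weakened to $2^{p-1}|K|/|S|$.
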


\begin{proof}
Since $C^1(K)$ is dense in $W^{1,p}(K)$, it is sufficient to establish
(\ref{aaa.08}) for $v\in C^1(K)$. For any $\bx, \by\in K$, we have
$$
|v|^p(\bx)=|v|^p(\by) - \int_0^{|\bx-\by|} \partial_r
(|v|^p(\bx+r\omega) )dr,\quad \omega=\frac{\by-\bx}{|\by-\bx|}.
$$
From the usual chain rule and the Cauchy-Schwarz inequality we obtain
\begin{eqnarray}
|v|^p(\bx)&=&|v|^p(\by) - p\int_0^{|\bx-\by|} |v|^{p-1} \sgn(v) \partial_r v(\bx+r\omega)
dr\nonumber\\
&\le & |v|^p(\by) + p\ \varepsilon^{-\frac{q}{p}} \int_0^{|\bx-\by|} |v|^p dr + p\ \varepsilon\int_0^{|\bx-\by|} |\partial_r v|^p dr,\label{aaa.10}
\end{eqnarray}
where $\varepsilon>0$ is any constant. Let
$$
V(\bx)=\left\{
\begin{array}{ll}
|v|^p(\bx),&\qquad\bx\in K\\
0,&\qquad\bx\notin K
\end{array}
\right.
$$
and
$$
W(\bx)=\left\{
\begin{array}{ll}
|\partial_r v|^p(\bx),&\qquad\bx\in K\\
0,&\qquad\bx\notin K.
\end{array}
\right.
$$
Then, the inequality (\ref{aaa.10}) can be rewritten as
$$
|v|^p(\bx)\le |v|^p(\by) + p\ \varepsilon^{-\frac{q}{p}} \int_0^{\infty} V(\bx+r\omega)dr + p\ \varepsilon\int_0^{\infty}
W(\bx+r\omega)dr.
$$
Integrating the above inequality with respect to $\by$ in $S$ yields
\begin{eqnarray}
& & |S| |v|^p(\bx) \leq \int_S |v|^p dS \label{aaa.16}\\
& & \ + p \int_{|\bx-\by|\le \delta}\left(\varepsilon^{-\frac{q}{p}}\int_0^{\infty} V(\bx+r\omega)dr+\varepsilon \int_0^{\infty} W(\bx+r\omega)dr  \right)d\by.\nonumber
\end{eqnarray}
It is not hard to see that
\begin{eqnarray}
\int_{|\bx-\by|\le \delta}\int_0^{\infty} V(\bx+r\omega)dr d\by &=& \int_0^\infty\int_{|\omega|=1}\int_0^\delta V(\bx+r\omega) \rho^{d-1}\ d\rho\ d\omega\ dr\nonumber\\
&=&\frac{\delta^d}{d}\int_0^\infty\int_{|\omega|=1}V(\bx+r\omega) d\omega\ dr\nonumber\\
&=&\frac{\delta^d}{d}\int_K |\bx-\by|^{1-d}
|v|^p(\by)d\by.\label{aaa.17}
\end{eqnarray}
Analogously, we have
\begin{equation}\label{aaa.18}
\int_{|\bx-\by|\le \delta}\int_0^{\infty} W(\bx+r\omega)dr d\by
=\frac{\delta^d}{d}\int_K |\bx-\by|^{1-d} |\partial_r v(\by)|^pd\by.
\end{equation}
Substituting (\ref{aaa.17}) and (\ref{aaa.18}) into (\ref{aaa.16}) yields
\begin{eqnarray*}
& & |S|\ |v|^p(\bx) \\
\leq && \int_S |v|^p dS + \frac{p\delta^d}{d}\left(
\varepsilon^{-\frac{q}{p}}\int_K |\bx-\by|^{1-d} |v|^p(\by)d\by
+\varepsilon\int_K |\bx-\by|^{1-d} |\nabla v(\by)|^pd\by\right).
\end{eqnarray*}
Observe that the following holds true
$$
\int_K|\bx-\by|^{1-d}d\bx \leq \delta S_{d-1},
$$
where $S_{d-1}$ is the ``area" of the unit surface in
$\mathbb{R}^d$. The volume of the unit ball in $\mathbb{R}^d$,
$\omega_d$, is related to $S_{d-1}$ as follows
$$
\omega_d=\frac{S_{d-1}}{d}.
$$
Now integrating both sides with respect to $\bx$ in $K$ gives
$$
|S|\int_K |v|^pdK\leq |K|\int_S |v|^p dS
+p\omega_d\delta^{d+1}\left(\varepsilon^{-\frac{q}{p}}\int_K |v|^pdK
+\varepsilon\int_K |\nabla v|^pdK\right),
$$
which yields the desired estimate (\ref{aaa.08}) by setting
$\varepsilon=
\left(\frac{2p\omega_d\delta^{d+1}}{|S|}\right)^{p-1}$.
\end{proof}
\medskip

Consider a case of Lemma \ref{appendix-lemma2} in which the convex
domain $K$ is a shape regular $d$-simplex. Denote by $h_K$ the
diameter of $K$. The shape regularity implies that
\begin{enumerate}
\item the measure of $K$ is proportional to $h_K^d$,
\item there exists an inscribed ball $B_K\subset K$ with diameter proportional to $h_K$.
\end{enumerate}
Now let $S$ be a ball inside of $K$ with radius $r_S\ge \varsigma_*
h_K$. Then, there exists a fixed constant $\kappa_*$ such that
\begin{equation}\label{aaa.21}
|K|\leq \kappa_* |S|.
\end{equation}
Apply (\ref{aaa.21}) in (\ref{aaa.08}) and notice that $\tau_* |S|\ge h_K^d$ and $\delta = h_K$. Thus,
\begin{equation}\label{aaa.28}
\|v\|^p_{L^p(K)} \leq 2 \kappa_* \|v\|_{L^p(S)}^p +\left(2p\tau_*
h_K\omega_d\right)^p\|\nabla v\|_{L^p(K)}^p.
\end{equation}
For simplicity of notation, we shall rewrite (\ref{aaa.28}) in the following form
\begin{equation}\label{aaa.38}
\|v\|^p_{K,p} \leq a_0 \|v\|_{S,p}^p+a_1 h_K^{p}\|\nabla v\|_{K,p}^p.
\end{equation}
If $v$ is infinitely smooth, then a recursive use of the estimate (\ref{aaa.38}) yields the following result
\begin{equation}\label{aaa.39}
\|v\|^p_{K,p} \leq \sum_{j=0}^n a_j h_K^{jp}\|\nabla^j v\|_{S,p}^p+a_{n+1} h_K^{pn+p}\|\nabla^{n+1} v\|_{K,p}^p.
\end{equation}
In particular, if $v$ is a polynomial of degree $n$, then
\begin{equation}\label{aaa.48}
\|v\|^p_{K,p} \leq \sum_{j=0}^n a_j h_K^{jp}\|\nabla^j v\|_{S,p}^p.
\end{equation}
The standard inverse inequality implies that
$$
\|\nabla^j v\|_{S,p} \lesssim h_K^{-j}\|v\|_{S,p}.
$$
Substituting the above into (\ref{aaa.48}) gives
\begin{equation}\label{aaa.58}
\|v\|^p_{K,p} \lesssim \|v\|_{S,p}^p.
\end{equation}
The result is summarized as follows.
\begin{lemma}[Domain Inverse Inequality]\label{appendix.thm}
Let $K\subset\mathbb{R}^d$ be a $d$-simplex which has diameter $h_K$ and is shape regular.
Assume that $S$ is a ball in $K$ with diameter $r_S$ proportional to $h_K$; i.e., $r_S\ge \varsigma_* h_K$ with a fixed $\varsigma_*>0$. Then, there exists a constant $C=C(\varsigma_*, n)$ such that
\begin{equation}\label{aaa.58-new}
\|v\|_{L^p(K)} \le C(\varsigma_*,n) \|v\|_{L^p(S)}
\end{equation}
for any polynomial $v$ of degree no more than $n$.
\end{lemma}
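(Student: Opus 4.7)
The plan is to build up from Lemma \ref{appendix-lemma2} by a recursive procedure that exploits the fact that polynomials of degree at most $n$ have vanishing $(n+1)$-th derivatives, and to finish off with a standard inverse-type inequality on the ball $S$. All the geometric hypotheses (shape regularity of $K$ and $r_S \ge \varsigma_* h_K$) will be absorbed into constants of the form $C(\varsigma_*, n, d)$.

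First, I would specialize Lemma \ref{appendix-lemma2} to the present setting. Because $K$ is a shape regular $d$-simplex of diameter $h_K$, we have $|K| \sim h_K^d$; similarly, $S$ being a ball of radius at least $\varsigma_* h_K$ gives $|S| \sim h_K^d$. In particular the ratio $|K|/|S|$ is uniformly bounded and $\delta^{d+1}/|S| = h_K^{d+1}/|S| \sim h_K$. Substituting these into (\ref{aaa.08}) yields the clean base estimate
\begin{equation*}
\|v\|^p_{L^p(K)} \le a_0 \|v\|_{L^p(S)}^p + a_1 h_K^{p} \|\nabla v\|_{L^p(K)}^p,
\end{equation*}
with constants $a_0, a_1$ depending only on $\varsigma_*$ and $d$. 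This is exactly (\ref{aaa.38}).

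Next, I would iterate. Every component of $\nabla v$ lies in $W^{1,p}(K)$, so applying the base estimate componentwise to $|\nabla^j v|$ produces
\begin{equation*}
h_K^{jp} \|\nabla^j v\|_{L^p(K)}^p \le a_0 h_K^{jp} \|\nabla^j v\|_{L^p(S)}^p + a_1 h_K^{(j+1)p} \|\nabla^{j+1} v\|_{L^p(K)}^p.
\end{equation*}
Telescoping from $j=0$ up to $j=n$ yields the bound (\ref{aaa.39}) with explicit constants $a_j = a_j(\varsigma_*, d)$. Now I invoke the polynomial hypothesis: since $\deg v \le n$, the remainder term $a_{n+1} h_K^{(n+1)p} \|\nabla^{n+1} v\|_{L^p(K)}^p$ vanishes identically, leaving
\begin{equation*}
\|v\|^p_{L^p(K)} \le \sum_{j=0}^{n} a_j h_K^{jp} \|\nabla^j v\|_{L^p(S)}^p,
\end{equation*}
which is (\ref{aaa.48}).

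Finally, I would eliminate the derivatives on the right-hand side using a standard inverse inequality on the ball. Polynomials of degree at most $n$ restricted to a ball form a finite-dimensional space on which all seminorms are equivalent; by rescaling to a reference ball of radius $1$, the constant comes out with the expected $h_K^{-j}$ scaling, giving $\|\nabla^j v\|_{L^p(S)} \le C(n) h_K^{-j} \|v\|_{L^p(S)}$ (here we use $r_S \ge \varsigma_* h_K$ so that the reference-ball scaling constants depend only on $\varsigma_*$ and $n$). Substituting into the displayed inequality collapses every term to a multiple of $\|v\|_{L^p(S)}^p$ and yields (\ref{aaa.58-new}).

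The main obstacle, if any, is making the final step rigorous with the right dependence on $\varsigma_*$ and $n$: one must be careful that the inverse inequality on $S$ is applied after rescaling so that the $h_K$-powers really do cancel, and that the equivalence constant only depends on the degree $n$ and the (scale-invariant) shape of $S$, not on $h_K$. Everything else is geometric bookkeeping built on Lemma \ref{appendix-lemma2}.
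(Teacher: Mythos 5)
Your proposal is correct and follows essentially the same route as the paper: both specialize Lemma \ref{appendix-lemma2} to the shape-regular simplex/ball pair to obtain the base estimate (\ref{aaa.38}), iterate it to reach (\ref{aaa.48}) using the fact that the $(n+1)$-st derivatives of a degree-$n$ polynomial vanish, and finish with the standard inverse inequality $\|\nabla^j v\|_{L^p(S)}\le C(n)h_K^{-j}\|v\|_{L^p(S)}$ on the ball $S$. Nothing further is needed.
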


\subsection{Inverse Inequalities}
The usual inverse inequality in finite element analysis also holds
true for piecewise polynomials defined on the finite element
partition $\T_h$ provided that it satisfies the assumptions {\bf
A1-A4}.

\begin{lemma}\label{appendix.inverse-inq}
Let $\T_h$ be a finite element partition of $\Omega$ consisting of
polygons or polyhedra. Assume that $\T_h$ satisfies all the
assumptions {\bf A1-A4} and $p\ge 1$ be any real number. Then, there
exists a constant $C=C(n)$ such that
\begin{equation}\label{aaa.88-new}
\|\nabla \varphi\|_{T,p} \le C(n) h^{-1}_T \|\varphi\|_{T,p},\qquad
\forall T\in \T_h
\end{equation}
for any piecewise polynomial $\varphi$ of degree $n$ on $\T_h$.
\end{lemma}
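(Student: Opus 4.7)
The plan is to reduce the inverse inequality on the general polytope $T$ to the classical inverse inequality on a shape-regular simplex, by exploiting the circumscribed simplex of Assumption {\bf A4} together with the Domain Inverse Inequality in Lemma \ref{appendix.thm}.

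First, since $\varphi|_T$ is a polynomial of degree at most $n$, it extends canonically to all of $\mathbb{R}^d$, and in particular to the circumscribed simplex $S(T)$ supplied by Assumption {\bf A4}. Because $S(T)$ is shape-regular with diameter $h_{S(T)} \le \gamma_* h_T$, the classical inverse inequality on simplices yields
\begin{equation*}
\|\nabla \varphi\|_{L^p(S(T))} \le C\, h_{S(T)}^{-1}\|\varphi\|_{L^p(S(T))} \le C\, h_T^{-1}\|\varphi\|_{L^p(S(T))}.
\end{equation*}
Since $T\subset S(T)$, we have $\|\nabla\varphi\|_{L^p(T)} \le \|\nabla\varphi\|_{L^p(S(T))}$ for free, so what remains is to dominate $\|\varphi\|_{L^p(S(T))}$ by $\|\varphi\|_{L^p(T)}$.

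For this step I would use the pyramid $P(e,T,A_e)$ inside $T$ promised by Assumption {\bf A3}, for any fixed face $e\subset\partial T$. The bounds $h_e \ge \kappa h_T$ (from {\bf A2}), $|e| \ge \varrho_e h_e^{d-1}$ (from {\bf A1}), height $\sigma_e h_T \ge \sigma^* h_T$, together with the apex-angle restriction $\theta_0 < \pi/2$, imply that $P(e,T,A_e)$ contains an inscribed ball $B$ whose diameter is a fixed positive multiple of $h_T$. Since $B\subset T\subset S(T)$ and $h_T\sim h_{S(T)}$, the diameter of $B$ is also proportional to $h_{S(T)}$, so Lemma \ref{appendix.thm} applied with $K=S(T)$ and $S=B$ gives
\begin{equation*}
\|\varphi\|_{L^p(S(T))} \le C\|\varphi\|_{L^p(B)} \le C\|\varphi\|_{L^p(T)}.
\end{equation*}
Chaining the two estimates yields the inverse inequality, with a constant depending only on the shape-regularity parameters and on $n$ (the latter dependence entering through Lemma \ref{appendix.thm}).

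The main obstacle is the purely geometric step of producing a ball of radius $\sim h_T$ inside $T$. One must combine the non-degeneracy of the base $e$ coming from {\bf A1}--{\bf A2} with the standing-up angle restriction in {\bf A3} to rule out arbitrarily flattened pyramids whose inradius would degenerate below any uniform multiple of $h_T$. Once this geometric fact is secured, the rest of the argument is a clean chaining of the classical simplex inverse inequality with Lemma \ref{appendix.thm}.
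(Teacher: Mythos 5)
Your argument is exactly the paper's proof: extend $\varphi$ to the circumscribed simplex $S(T)$ of Assumption \textbf{A4}, apply the classical inverse inequality there, and then pass from $\|\varphi\|_{L^p(S(T))}$ back to $\|\varphi\|_{L^p(T)}$ via Lemma \ref{appendix.thm} with $K=S(T)$ and $S$ a ball of diameter proportional to $h_T$ contained in $T$. The only difference is that you explicitly flag the geometric fact that such a ball exists (and sketch how to extract it from \textbf{A1}--\textbf{A3}), whereas the paper simply asserts it; your version is, if anything, slightly more careful.
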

\begin{proof}
The proof is merely a combination of Lemma \ref{appendix.thm} and
the standard inverse inequality on d-simplices. To this end, for any
$T\in \T_h$, let $S(T)$ be the circumscribed simplex that is shape
regular. It follows from the standard inverse inequality that
$$
\|\nabla \varphi\|_{T,p}\leq \|\nabla\varphi\|_{S(T),p}\leq
Ch_T^{-1}\|\varphi\|_{S(T),p}.
$$
Then we use the estimate (\ref{aaa.58-new}), with $K=S(T)$, to
obtain
$$
\|\nabla \varphi\|_{T,p}\leq Ch_T^{-1}\|\varphi\|_{S,p}\leq
Ch_T^{-1}\|\varphi\|_{T,p},
$$
where $S$ is a ball inside of $T$ with a diameter proportional to
$h_T$. This completes the proof of the lemma.
\end{proof}

\begin{lemma}\label{appendix.inverse-inq-2}
Let $\T_h$ be a finite element partition of $\Omega$ consisting of
polygons or polyhedra. Assume that $\T_h$ satisfies all the
assumptions {\bf A1-A4} and $p\ge r \ge 1$ be any two real numbers.
Then, there exists a constant $C=C(n)$ such that
\begin{equation}\label{aaa.189}
\|\varphi\|_{L^p(\Omega)} \le Ch^{\frac{d}{p}-\frac{d}{r}}
\|\varphi\|_{L^r(\Omega)}
\end{equation}
for any piecewise polynomial $\varphi$ of degree $n$ on $\T_h$.
\end{lemma}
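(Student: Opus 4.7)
The plan is to establish the result in two stages: a local inverse inequality on each element, then a summation argument to pass to $\Omega$.

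For the local bound I would show that for every $T\in\T_h$ and every polynomial $\varphi$ of degree at most $n$,
\[
\|\varphi\|_{L^p(T)} \le C\, h_T^{d/p - d/r}\,\|\varphi\|_{L^r(T)}.
\]
Since $T$ is an arbitrary polygon or polyhedron, one cannot scale $T$ itself to a fixed reference element; instead I would sandwich $T$ between the circumscribed shape-regular simplex $S(T)\supset T$ supplied by Assumption {\bf A4} (with $h_{S(T)}\le \gamma_* h_T$) and an inscribed ball $B\subset T$ of diameter proportional to $h_T$, whose existence can be extracted from the pyramid $P(e,T,A_e)$ of Assumption {\bf A3} together with the size bounds in {\bf A1}--{\bf A2}. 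The chain of inequalities is then: (i) $\|\varphi\|_{L^p(T)}\le \|\varphi\|_{L^p(S(T))}$ by $T\subset S(T)$; (ii) the classical scaling inverse inequality on the shape-regular simplex $S(T)$ gives $\|\varphi\|_{L^p(S(T))}\le C\,h_{S(T)}^{d/p-d/r}\|\varphi\|_{L^r(S(T))}$; (iii) Lemma \ref{appendix.thm} applied with $K=S(T)$ and $S=B$ reverses the inclusion on the polynomial space to give $\|\varphi\|_{L^r(S(T))}\le C\|\varphi\|_{L^r(B)}$; (iv) $\|\varphi\|_{L^r(B)}\le \|\varphi\|_{L^r(T)}$ by $B\subset T$.

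For the global step I would raise the local bound to the $p$-th power, sum over $T\in\T_h$, and apply the embedding $\ell^p(\T_h)\hookrightarrow\ell^r(\T_h)$ valid for $p\ge r$, namely $\sum_T a_T^p \le \bigl(\sum_T a_T^r\bigr)^{p/r}$, with $a_T=\|\varphi\|_{L^r(T)}$, so that $\sum_T \|\varphi\|_{L^r(T)}^p \le \|\varphi\|_{L^r(\Omega)}^p$. Combined with the uniformity of $h_T$ implied by Assumptions {\bf A1}--{\bf A4} so that $h_T^{d(1-p/r)}$ can be replaced by a constant multiple of $h^{d(1-p/r)}$, taking the $p$-th root yields the stated bound.

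The main obstacle is step (iii) of the local bound: reversing the trivial inclusion $B\subset T\subset S(T)$ to control $\|\varphi\|_{L^r(S(T))}$ by $\|\varphi\|_{L^r(T)}$ is impossible for general functions but becomes tractable for polynomials of bounded degree thanks to the domain inverse inequality just established in Lemma \ref{appendix.thm}. This is precisely the reason Lemmas \ref{appendix-lemma2} and \ref{appendix.thm} were developed in the preceding subsection.
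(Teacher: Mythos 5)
Your proposal follows essentially the same route as the paper: the local bound via the chain $T\subset S(T)$, the standard inverse inequality on the shape-regular simplex, the domain inverse inequality of Lemma \ref{appendix.thm} with $K=S(T)$ and a ball $S\subset T$, and then the $\ell^r$-versus-$\ell^p$ comparison of the elementwise norms to globalize. The only caveat (shared with, and implicit in, the paper's own proof) is that replacing the negative power $h_T^{d(1-p/r)}$ by $Ch^{d(1-p/r)}$ uses quasi-uniformity of the mesh rather than just the shape-regularity assumptions {\bf A1}--{\bf A4}.
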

\begin{proof}
For any $T\in \T_h$, let $S(T)$ be the circumscribed simplex that is
shape regular. It follows from the standard inverse inequality that
$$
\|\varphi\|_{T,p}\leq \|\varphi\|_{S(T),p}\leq
Ch_T^{\frac{d}{p}-\frac{d}{r}}\|\varphi\|_{S(T),r}.
$$
We then use the estimate (\ref{aaa.58-new}), with $K=S(T)$, to
obtain
\begin{eqnarray}
\|\varphi\|_{T,p}&\leq & Ch_T^{\frac{d}{p}-\frac{d}{r}}
\|\varphi\|_{S,r}\nonumber\\
&\leq &
Ch_T^{\frac{d}{p}-\frac{d}{r}}\|\varphi\|_{T,r},\label{App:local-inverse}
\end{eqnarray}
where $S$ is a ball inside of $T$ with a diameter proportional to
$h_T$. It follows from (\ref{App:local-inverse}) that
\begin{eqnarray*}
\|\varphi\|_{L^p(\Omega)}^p &=&
\sum_{T\in\T_h}\|\varphi\|_{T,p}^p\\
&\leq & Ch^{d-\frac{pd}{r}} \sum_{T\in\T_h} \|\varphi\|_{T,r}^p.
\end{eqnarray*}
Since $\frac{r}{p}\le 1$, then we have from the above inequality
that
\begin{eqnarray*}
\|\varphi\|_{L^p(\Omega)}^r &\leq & Ch^{\frac{rd}{p}-d}
\left(\sum_{T\in\T_h} \|\varphi\|_{T,r}^p\right)^{\frac{r}{p}}\\
&\leq & Ch^{\frac{rd}{p}-d} \sum_{T\in\T_h} \|\varphi\|_{T,r}^r,
\end{eqnarray*}
which implies the desired inverse inequality (\ref{aaa.189}).
\end{proof}

\vfill\eject

\end{document}